\newcommand*{\sheafhom}{\mathcal{H}\kern -.5pt om}
\numberwithin{equation}{section} 
\numberwithin{figure}{section} 
\numberwithin{table}{section} 
\newtheorem{thm}{Theorem}[section]
\newtheorem{cor}[thm]{Corollary}
\newtheorem{prop}[thm]{Proposition}
\newtheorem{lem}[thm]{Lemma}
\newtheorem{quest}[thm]{Question}
\theoremstyle{definition}
\newtheorem{exmp}[thm]{Example}
\theoremstyle{remark}
\newtheorem{rem}[thm]{Remark}
\DeclareMathOperator{\Var}{Var}
\DeclareMathOperator{\Bl}{Bl}
\DeclareMathOperator{\cha}{char}
\newcommand{\horrule}[1]{\rule{\linewidth}{#1}} 
\title{	
	\normalfont \normalsize 
	\textsc{} \\ [25pt] 
	\horrule{0.5pt} \\[0.4cm] 
	\huge Characterizing cubic hypersurfaces via projective geometry

	\horrule{2pt} \\[0.5cm] 
}
\author{Soohyun Park} 
\date{\normalsize September 16, 2022} 
\begin{document}
	
	\maketitle

	We use the cut and paste relation $[Y^{[2]}] = [Y][\mathbb{P}^m] + \mathbb{L}^2 [F(Y)]$ in $K_0(\Var_k)$ of Galkin--Shinder for cubic hypersurfaces arising from projective geometry to characterize cubic hypersurfaces of sufficiently high dimension under certain numerical or genericity conditions. Removing the conditions involving the middle Betti number from the numerical conditions used extends the possible $Y$ to cubic hypersurfaces, complete intersections of two quadric hypersurfaces, or complete intersections of two quartic hypersurfaces. The same method also gives a family of other cut and paste relations that can only possibly be satisfied by cubic hypersurfaces. 
	
	\section{Introduction}

	The main objective of this note is to provide a characterization of cubic hypersurfaces using projective geometry under certain numerical/genericity conditions. This characterization is based on satisfying a relation of Galkin--Shinder (Theorem 5.1 on p. 16 of \cite{GS}) in the Grothendieck ring of varieties $K_0(\Var_k)$ called the $Y-F(Y)$ relation.  Given a reduced cubic hypersurface $Y \subset \mathbb{P}^{m + 1}$ and its Fano variety of lines $F(Y) \subset \mathbb{G}(1, m + 1)$, it states that  \[ [Y^{[2]}] = [Y][\mathbb{P}^m] + \mathbb{L}^2 [F(Y)]  \text{ (equivalently $[Y^{(2)}] = (1 + \mathbb{L}^m) [Y] + \mathbb{L}^2 [F(Y)]$ ) } \]  in $K_0(\Var_k)$, where $Y^{[2]}$ is the Hilbert scheme of two points on $Y$ and $Y^{(2)}$ is the second symmetric product. The general idea is to pair distinct points of $Y$ parametrizing a line with the third pont of intersection and the line defined by the initial pair of points. Here, the terms involving $F(Y)$ come from removing instances where the lines involved are contained in $Y$. We assume that $k$ is an algebraically closed field of characteristic $0$. As it turns out, the method we use is not unique to this relation and we indicate other relations in $K_0(\Var_k)$ which can fill this role. Some examples in lower dimensions are given in the final section. \\

	We give some context connecting this characterization to the overall structure of $K_0(\Var_k)$. Most work on the general structure of $K_0(\Var_k)$ or specific subrings generated by certain types of varieties focus on the space of possible relations (e.g. whether $\mathbb{L}$ is a zerodivisor). Instead, we focus on the converse question of what varieties satisfy a given relation. While this can be approached using existing work on varieties known to satisfy Larsen--Lunts' cut and paste property (e.g. varieties containing finitely many rational curves in Theorem 6.3.7 on p. 142 of \cite{CNS} although known to be false in general) or the graded ring associated to $K_0(\Var_k)$ \cite{Kub} via birational equivalence classes, there does not seem to be much known outside of these settings. We start to explore this in the case of Galkin--Shinder's $Y-F(Y)$ relation (Theorem 5.1 on p. 16 of \cite{GS}). 
	
	\begin{quest}(Farb) \label{whatyfy} \\
		Given a closed subvariety $Y \subset \mathbb{P}^n$ of dimension $m \ge 1$, let $F(Y) \subset \mathbb{G}(1, n)$ be the Fano variety of lines parametrizing lines in $\mathbb{P}^n$ contained in $Y$. If $Y$ satisfies the $Y-F(Y)$ relation, is it a cubic hypersurface? 
	\end{quest}

	In Section \ref{numyfy}, we address this question given some numerical/concrete topological conditions on the variety (Theorem \ref{resyfy}) which can be relaxed if we assume Hartshorne's conjecture or the Debarre--de Jong conjecture (Remark \ref{conjconseq}). 
	
	\begin{thm} \label{resyfy}
		
		Let $k$ be a field of characteristic $0$ such that $\overline{k} = k$. Let $Y \subset \mathbb{P}^n$ be a nondegenerate, irreducible, smooth projective variety over $k$ of dimension $m$ and degree $d$ satisfying one of the following conditions:
		
		\begin{enumerate}[a.]
			\item $d \le \frac{n}{4}$, or
			
			\item $Y$ can be defined by $\le \frac{n}{2}$ equations of degree $\le \frac{n}{2}$.
		\end{enumerate}

		Suppose that $Y$ is \emph{not} a 1-dimensional family of quadrics or a $2$-dimensional family of (projective) $(m - 2)$-planes (each isomorphic to $\mathbb{P}^{m - 2}$). If $m \ge 7$, then the first condition implies the second.
		
		\begin{enumerate}
			\item $Y$ satisfies the $Y-F(Y)$ relation $[Y^{[2]}] = [Y][\mathbb{P}^m] + \mathbb{L}^2 [F(Y)]$ 
			
			\item $Y$ is a cubic hypersurface, the intersection of two quadric hypersurfaces, or the intersection of two quartic hypersurfaces. The middle Betti number $b_m$ gives additional constraints:
			
				\begin{itemize}
					\item If $b_m$ is exponential in $m$, $Y$ is either a cubic hypersurface or the intersection of two quartics.
					
					\item If $m + 5 \le b_m < 2 \cdot 3^m - 5$, then $Y$ is a cubic hypersurface.
				\end{itemize}
			
		  	If the middle Betti number $b_m$ is exponential in $m$, it must either be a cubic hypersurface or the intersection of two quartic hypersurfaces. 
		\end{enumerate}
		
	\end{thm}
	
	Note that $F(Y)$ is connected if $Y$ is a cubic hypersurface of dimension $\ge 3$ (p. 12 of \cite{GS}). \\
	
	\begin{rem} \label{conjconseq}
		Here are some comments on the assumptions of Theorem \ref{resyfy}.
		
		\begin{enumerate}
			\item If Hartshorne's conjecture (part 1 of Remark \ref{conjsharp}) holds in codimension 2, then the conditions on $Y$ can be weakened to $2d - 4 \le n$ and $n \ge 7$. Note that the uniruledness property already implies $d \le n$ if $Y$ is a hypersurface and $d_1 + d_2 \le n$ if $Y$ has codimension 2 and $d_1, d_2$ are the degrees of the hypersurfaces whose intersection is equal to $Y$. Another possible replacement of the conditions is to take $Y$ contained in \emph{some} hypersurface of degree $d$ such that $2d - 4 \le n$. \\

			These numerical conditions can be removed if we assume that the analogue of Debarre--de Jong conjecture (Conjecture 1.2 on p. 1 of \cite{BR}) for complete intersections holds. This would imply that \[ \dim F(Y) = 2n - d - (n - m) - 2 = n - d + m - 2, \] where $d = \sum_{i = 1}^{n - m} d_i$ is the sum of the degrees of the hypersurfaces. As with the original Debarre--de Jong conjecture, it is well-known to hold in the generic case (e.g. Proposition 2.1 on p. 2 of arXiv version of \cite{Can}). When $n - m = 2$, this implies that $\dim F(Y) = m + 2 - d + m - 2 = 2m - d$, which is equal to $2m - 4$ only if $d = d_1 + d_2 = 4$. The proof of Theorem \ref{resyfy} eliminates this case. \\
			
			\item Moving to very generic properties, there are many examples of varieties that are not necessarily complete intersections which cannot satisfy the $Y-F(Y)$ relation (Example \ref{vgen}). 
			
			\item If $Y$ is a variety with a connected Fano variety of lines $F(Y)$ satisfying the $Y-F(Y)$ relation (e.g. the case of cubic hypersurfaces), it is \emph{not} a $1$-dimensional family of quadrics or a $2$-dimensional family of (projective) $(m - 2)$-planes since such varieties satisfying the $Y-F(Y)$ relation cannot be connected. This is shown in Proposition \ref{yfyfamex}. \\
			
		\end{enumerate}

	\end{rem}

	\begin{rem}

		The uniqueness of cubic hypersurfaces as varieties satisfying the $Y-F(Y)$ relation also applies in the localization rather than $K_0(\Var_k)$. This is because the same logic applies whenever the desired relation among varieties in $K_0(\Var_k)$ forces $\dim F(Y) = 2m - 4$ (Theorem 2 on p. 207 -- 208 of \cite{R}).  \\
	\end{rem}
 
	The proof of this result uniquely characterizes cubic hypersurface among generic hypersurfaces of a given degree.
	
	\begin{cor} (Corollary \ref{gencubic})
		Suppose that $Y \subset \mathbb{P}^n$ is a hypersurface generic among those of its degree. If $Y$ satisfies the $Y-F(Y)$ relation, then $Y$ is a cubic hypersurface. 
	\end{cor}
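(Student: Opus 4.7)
The plan is to apply the dimensional core of the proof of Theorem \ref{resyfy}, combined with the classical generic case of the Debarre--de Jong formula.

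First, I would observe that the $Y-F(Y)$ relation, taken as an identity in $K_0(\Var_k)$ for a smooth irreducible $m$-dimensional $Y$, forces
\[
\dim F(Y) = 2m - 4.
\]
This is the dimensional skeleton of Theorem \ref{resyfy}: comparing leading Hodge--Deligne (equivalently, top motivic-dimension) terms on both sides of $[Y^{[2]}] = [Y][\mathbb{P}^m] + \mathbb{L}^2 [F(Y)]$ pins the dimension of $\mathbb{L}^2 [F(Y)]$ at $2m - 2$. The numerical hypotheses $d \le n/4$ or ``definable by $\le n/2$ equations'', the $m \ge 7$ bound, and the exclusion of $1$-dimensional families of quadrics and $2$-dimensional families of $(m-2)$-planes in Theorem \ref{resyfy} are used only afterward to pin down $Y$'s geometric type (cubic versus intersection of two quadrics); none of these excluded configurations arise for a generic hypersurface.

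Second, I would invoke the generic-case Debarre--de Jong formula: for a generic hypersurface $Y \subset \mathbb{P}^{m+1}$ of degree $d$, the Fano variety of lines has its expected dimension
\[
\dim F(Y) = 2(m+1) - (d+1) - 2 = 2m - d - 1,
\]
a classical fact collected in Remark \ref{conjconseq}. Equating $2m - d - 1 = 2m - 4$ yields $d = 3$, so $Y$ is a cubic hypersurface. The low-degree cases are ruled out directly, since a hyperplane gives $\dim F(Y) = 2m - 2$ and a smooth quadric gives $\dim F(Y) = 2m - 3$, neither of which matches $2m - 4$.

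The main obstacle, as in Theorem \ref{resyfy}, is the first step: passing from a $K_0(\Var_k)$ identity to the exact equality $\dim F(Y) = 2m - 4$, rather than merely an inequality $\dim F(Y) \le 2m - 4$. This requires working modulo the dimension filtration on $K_0(\Var_k)$, carefully matching leading E-polynomial coefficients (or, equivalently, point counts over finite fields after reduction). Once this dimensional extraction is available from the proof of Theorem \ref{resyfy}, the corollary follows immediately from the Debarre--de Jong input without any further numerical hypothesis on $m$ or $d$.
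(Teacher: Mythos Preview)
Your proposal is correct and follows essentially the same route as the paper: the paper states that the corollary is an immediate consequence of Lemma \ref{reldim} (the Poincar\'e-polynomial computation giving $\dim F(Y) = 2m - 4$, which is exactly the ``dimensional skeleton'' you extract from the proof of Theorem \ref{resyfy}) together with the generic Debarre--de Jong dimension formula recorded in Remark \ref{conjconseq}. The only cosmetic difference is that the paper phrases the first step via Poincar\'e polynomials rather than Hodge--Deligne polynomials, but the content is identical.
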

	
	Genericity also plays a role in a different approach involving uniruledness properties in Section \ref{uniruledyfy} (Proposition \ref{gencodim}, Corollary \ref{nodeg}). Here are some examples of results on varieties satisfying the $Y-F(Y)$ relation:
	
	\begin{cor} (Corollary \ref{degbound}) \\
		Assume that $\overline{k} = k$ and $\cha k = 0$ as above and suppose that $Y \subset \mathbb{P}^n$ is a $d$-dimensional variety satisfying the $Y-F(Y)$ relation. Then, the variety $Y$ is \emph{not} contained in a general hypersurface of degree $r > 2n - 3$. \\
	\end{cor}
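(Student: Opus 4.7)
My plan is to pit two dimension estimates for the Fano scheme of lines $F(Y) \subset \mathbb{G}(1, n)$ against each other: a lower bound arising from the $Y-F(Y)$ relation itself, and total vanishing of the Fano scheme for a general hypersurface of high degree in $\mathbb{P}^n$. These become incompatible once $r > 2n - 3$, yielding the desired conclusion.

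First, I would invoke the dimension consequence of the $Y-F(Y)$ relation established in Section \ref{numyfy} during the proof of Theorem \ref{resyfy}: for an irreducible $d$-dimensional variety $Y$ satisfying $[Y^{[2]}] = [Y][\mathbb{P}^d] + \mathbb{L}^2 [F(Y)]$ in $K_0(\Var_k)$, one has $\dim F(Y) = 2d - 4$, so in particular $F(Y)$ is nonempty for $d \ge 2$. This is extracted from the top-dimensional part of the relation via the dimension filtration on $K_0(\Var_k)$ (or equivalently by comparing leading terms of the Hodge--Deligne polynomial $E(\cdot; u, v)$). The edge cases $d \in \{0, 1\}$ can be disposed of directly: such a low-dimensional $Y$ cannot lie inside a generic hypersurface of degree $r \gg 0$ for simple parameter-counting reasons.

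Next, I would suppose for contradiction that $Y$ is contained in a general hypersurface $X \subset \mathbb{P}^n$ of degree $r > 2n - 3$. Any line contained in $Y$ is automatically contained in $X$, yielding a closed embedding $F(Y) \hookrightarrow F(X)$ in $\mathbb{G}(1, n)$. The standard incidence-correspondence computation then gives: $F(X)$ is cut out in $\mathbb{G}(1, n)$ (of dimension $2n - 2$) by the $r + 1$ linear conditions requiring the defining polynomial of $X$ to vanish identically on a line, so its expected dimension is $2n - r - 3$. A parameter count projecting the incidence variety onto the projective space of degree-$r$ hypersurfaces shows that for $X$ general this expected dimension is achieved when non-negative, and $F(X) = \emptyset$ otherwise. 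Our hypothesis $r > 2n - 3$ gives $2n - r - 3 < 0$, so $F(X) = \emptyset$ for general $X$. This forces $F(Y) = \emptyset$, contradicting $\dim F(Y) = 2d - 4 \ge 0$.

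The main obstacle is the first step: extracting the dimension equality $\dim F(Y) = 2d - 4$ from the $K_0(\Var_k)$-relation rests on the careful dimension-filtration analysis carried out in Section \ref{numyfy}. Granting this, the remainder of the argument is a routine Grassmannian dimension count using the well-known geometry of Fano schemes of general hypersurfaces.
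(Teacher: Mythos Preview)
Your argument is correct and follows essentially the same route as the paper: non-emptiness of $F(Y)$ from the $Y-F(Y)$ relation, combined with the standard fact that a general hypersurface of degree $r > 2n-3$ contains no lines, forces a contradiction via the inclusion $F(Y) \subset F(X)$. The only difference is that the paper invokes Lemma~\ref{noemp} directly, which already gives $F(Y) \ne \emptyset$ for all $d \ge 1$ without further hypotheses, whereas you pass through the stronger dimension statement $\dim F(Y) = 2d-4$ (Lemma~\ref{reldim}) and then have to dispose of the low-dimensional cases $d \le 1$ by a separate parameter-count; appealing to Lemma~\ref{noemp} would make your argument cleaner and avoid that detour.
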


	\begin{cor} (Corollary \ref{nodeg}) \\
		Assume that $\overline{k} = k$ and $\cha k = 0$ as usual. If $Y \subset \mathbb{P}^n$ is a variety of dimension $\ge 2$ which is the complete intersection of $m \ge 2$ hypersurfaces $W_i$ which are generic among those of their degrees $r_i$ for each $i$, then it does \emph{not} satisfy the $Y-F(Y)$ relation. \\
	\end{cor}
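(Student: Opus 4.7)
The plan is to combine the generic dimension count for the Fano variety of lines on a complete intersection with the dimensional constraint imposed by the $Y$-$F(Y)$ relation, in the spirit of Remark \ref{conjconseq}, and then dispose of the single numerical borderline case that this dimension count leaves open. For a general complete intersection $Y = W_1 \cap \cdots \cap W_m$ of hypersurfaces of degrees $r_1, \ldots, r_m$ in $\mathbb{P}^n$, genericity ensures that the Debarre--de Jong expected dimension is attained: either $F(Y)$ is empty or
\[
\dim F(Y) \;=\; 2n - 2 - m - \sum_{i=1}^m r_i.
\]
This complete intersection form of Debarre--de Jong is known unconditionally in the generic case, as invoked in Remark \ref{conjconseq}.

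Meanwhile, the same dimensional analysis used in the proof of Theorem \ref{resyfy} shows that if $Y$ satisfies the $Y$-$F(Y)$ relation, then $F(Y)$ is nonempty and $\dim F(Y) = 2\dim Y - 4 = 2(n - m) - 4$. Equating the two expressions collapses to the Diophantine constraint
\[
\sum_{i=1}^m r_i \;=\; m + 2.
\]
Since each $W_i$ is a genuine hypersurface with $r_i \ge 2$ (any linear factor could be absorbed into the ambient $\mathbb{P}^n$), we have $\sum r_i \ge 2m$, and combined with $m \ge 2$ this forces $m = 2$ and $r_1 = r_2 = 2$. Hence the only way the relation could survive is if $Y = Q_1 \cap Q_2$ is a complete intersection of two quadrics.

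The remaining step, and what I view as the main obstacle, is to rule out this borderline $(2,2)$ case. I would argue this by passing from the purely numerical argument above to the actual motivic comparison, using the explicit pencil structure $\{\lambda Q_1 + \mu Q_2\}$ and the standard description of $F(Q_1 \cap Q_2)$ as a variety fibered over the associated discriminant curve. Either a direct cut-and-paste computation of $[Y^{[2]}]$ from this pencil structure, or an application of Proposition \ref{yfyfamex} combined with the genericity of the pencil (ensuring $Y$ is not itself a $1$-dimensional family of quadrics or $(m{-}2)$-planes, so that the connectedness hypothesis on $F(Y)$ in Theorem \ref{resyfy} can be invoked against $Y$), should show that the two sides of the $Y$-$F(Y)$ relation cannot agree for generic $Q_1, Q_2$. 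This last step is where the argument leaves the dimensional regime used for $m \ge 3$ and becomes genuinely geometric, and it is the principal technical challenge of the proof.
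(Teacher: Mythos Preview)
Your route is genuinely different from the paper's. The paper proves this corollary by invoking Corollary \ref{degbound} (if any $r_i > 2n-3$ then the generic $W_i$ already has $F(W_i)=\emptyset$, so $F(Y)=\emptyset$, contradicting Lemma \ref{noemp}) together with Part~2 of Proposition \ref{gencodim}. The key extra ingredient there, which you never use, is the \emph{uniruledness} of $Y$ from Proposition \ref{kodimnew}: the $Y$--$F(Y)$ relation gives $[Y^{(2)}]\equiv [Y]\pmod{\mathbb{L}}$, hence $Y$ is uniruled, hence $\sum r_i\le n$ for a complete intersection. Feeding this into the lower bound on $\dim F(Y)$ coming from intersecting the $F(W_i)$ inside the Grassmannian produces a contradiction with $\dim F(Y)=2\dim Y-4$ uniformly for every $m\ge 2$, so no borderline case ever appears.

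Your argument via the exact generic dimension $\dim F(Y)=2n-2-m-\sum r_i$ is correct through the reduction to $(m,r_1,r_2)=(2,2,2)$, but the sketch for that surviving case does not close. Proposition \ref{yfyfamex} is about varieties that \emph{are} one-parameter unions of quadrics, not intersections of two quadrics, so it is not relevant here. And Theorem \ref{resyfy} does not eliminate intersections of two quadrics: it explicitly lists them alongside cubic hypersurfaces, removing them only under the extra hypothesis that $F(Y)$ is connected (which you would have to establish separately), via a Betti-number comparison rather than the pencil or cut-and-paste argument you gesture at; moreover Theorem \ref{resyfy} carries the hypothesis $\dim Y\ge 7$, so even granting connectedness your appeal to it leaves the low-dimensional $(2,2)$ cases untouched. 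The paper's uniruledness input is precisely what lets it bypass this entire case analysis.
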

	
	Along the way, we obtain restrictions on varieties which satsify relations that share some properties with the $Y-F(Y)$ relation (Proposition \ref{kodimnewext}, Corollary \ref{anarel}) and answer a question of Cadorel--Campana--Rousseau \cite{CCR} related to a connection between uniruledness and symmetric products of varieties (Example \ref{symsb}). The steps used give an alternative method of restricting varieties satsifying the $Y-F(Y)$ relation (Proposition \ref{gencodim}, Corollary\ref{nodeg}). We also note that the $Y-F(Y)$ relation does \emph{not} generate all polynomial relations between terms involved in the $Y-F(Y)$ relation (Remark \ref{incomprel}).

	\section*{Acknowledgements}
	
	I am very grateful to my advisor Benson Farb for the motivating problem and helpful suggestions throughout the project. Also, I would like to thank him for extensive comments on preliminary versions of this paper. In addition, I'd like to thank Renjie Lyu for pointing out a missing assumption in an earlier version and very helpful discussions related to Fano varieties of lines.
	
	\section{Varieties satisfying the $Y-F(Y)$ relation} \label{yfyvars}
	
	In this section, we give some numerical and genericity conditions under which the $Y-F(Y)$ relation uniquely characterizes cubic hypersurfaces. 
	
	\subsection{Using numerical conditions} \label{numyfy}
	
	Our first main result (Theorem \ref{resyfy}) is a characterization of when varieties satisfy the $Y-F(Y)$ relation under certain numerical conditions. Assuming that the degree is sufficiently small compared to the dimension of the projective space it is embedded in, we can show that only cubic hypersurfaces satsify the $Y-F(Y)$ relation if $Y$ is \emph{not} a 1-dimensional family of quadrics or a 2-dimensional family of $(m - 2)$-planes ($m = \dim Y$). It turns out that the only possible smooth projective varieties $Y \subset \mathbb{P}^n$ which can satisfy the $Y-F(Y)$ relation are those which have codimension $\le 2$. Before giving the full proof of Theorem \ref{resyfy}, we give an outline of the arguments used. \\
	
	Recall that the $Y-F(Y)$ relation is given by $[Y^{[2]}] = [Y][\mathbb{P}^m] + \mathbb{L}^2 [F(Y)]$ in $K_0(\Var_k)$. This can be rewritten as $[Y^{[2]}] - [Y][\mathbb{P}^m] = \mathbb{L}^2 [F(Y)]$ (Theorem 5.1 on p. 16 of \cite{GS}). Substituting in Poincar\'e polynomials to this rearranged relation, each side has terms of degree $\ge 4$ since each side is a multiple of $p_{\mathbb{A}^1}(t)^2 =  t^4$. In particular, this means that the coefficients of $t^2$ and $t$  is $0$ on each side and $b_1 = 0$. The first nonzero term is from $t^{4m - 4}$ since the terms of degree $4m, 4m - 1, 4m - 2$, and $4m - 3$ are all equal to $0$. Dividing by $t^4$, we find that $\deg p_{F(Y)}(t) = 4m - 8$ and $\dim F(Y) = 2m - 4$. Both of these (and most of the computation in general) are shown in Lemma \ref{reldim}. Work of Rogora \cite{R} then implies that $Y$ has codimension $\le 2$ if $n$ is sufficiently large. The theorem then follows from applying results (e.g. work on Hartshorne's conjecture) which imply that $Y$ is a complete intersection under the given conditions. \\
	
	The first observation we make before proving Theorem \ref{resyfy} is that taking $m \ge 1$ implies that $F(Y) \ne \emptyset$. 
	
	\begin{lem} \label{noemp}
		Suppose that $Y \subset \mathbb{P}^n$ is a connected $m$-dimensional variety satisfying the $Y-F(Y)$ relation. If $m \ge 1$, the Fano variety of lines $F(Y) \ne \emptyset$. 
	\end{lem}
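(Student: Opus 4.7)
I will argue by contradiction: suppose $F(Y) = \emptyset$, so that the $Y-F(Y)$ relation collapses to $[Y^{[2]}] = [Y][\mathbb{P}^m]$ in $K_0(\Var_k)$. The plan is to pass to the Poincar\'e polynomial ring homomorphism $P \colon K_0(\Var_k) \to \mathbb{Z}[t]$ (with $P(\mathbb{L}) = t^2$), obtaining
\[
P_{Y^{[2]}}(t) \;=\; P_Y(t) \cdot P_{\mathbb{P}^m}(t),
\]
and then derive a numerical contradiction by equating coefficients.

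The central technical step is to express $P_{Y^{[2]}}(t)$ explicitly in terms of the Betti numbers of $Y$. For this I would use two ingredients: first, the Hilbert--Chow decomposition $[Y^{[2]}] = [Y^{(2)}] + \mathbb{L}\cdot [\mathbb{P}^{m-2}] \cdot [Y]$, in which the correction term records the projectivized tangent bundle of $Y$ along the diagonal; and second, the super-commutative symmetric-square formula
\[
P_{Y^{(2)}}(t) \;=\; \frac{P_Y(t)^2 + P_Y(-t^2)}{2},
\]
coming from $H^*(Y^{(2)},\mathbb{Q}) = \mathrm{Sym}^2_{\mathrm{super}} H^*(Y,\mathbb{Q})$. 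Together these turn the identity above into an explicit polynomial equation in the $b_i(Y)$, and I would compare coefficients of low powers of $t$.

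Equating the coefficient of $t^2$ yields $\binom{b_1(Y)}{2} = 0$, whence $b_1(Y) \leq 1$; since $b_1$ is even for a smooth projective variety, in fact $b_1(Y) = 0$. For $m \geq 3$, equating the coefficient of $t^4$ then produces
\[
b_1(Y)\, b_3(Y) + \binom{b_2(Y)+1}{2} \;=\; 0,
\]
and non-negativity of Betti numbers forces $b_2(Y) = 0$. This contradicts the fact that the hyperplane class of the embedding $Y \subset \mathbb{P}^n$ restricts to a non-zero element of $H^2(Y,\mathbb{Q})$ (since $\int_Y H^m = \deg Y > 0$), so $b_2(Y) \geq 1$.

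The main obstacle is the low-dimensional regime. For $m = 2$ the $t^4$-comparison degenerates to $b_1 b_3 + \binom{b_2+1}{2} = 1$, which is compatible with the Betti profile $(1,0,1,0,1)$ of $\mathbb{P}^2$, and the case $m = 1$ degenerates still further. Handling these borderline cases cleanly would require a supplementary geometric input (for instance, using the nondegeneracy of the embedding to exclude the exceptional Betti profiles, or invoking classification results for smooth projective varieties of small Picard rank).
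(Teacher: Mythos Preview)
Your approach is the same one the paper takes---pass to Poincar\'e polynomials and compare low-degree coefficients---and for $m \ge 3$ your execution is actually cleaner than the paper's. The paper attempts to run an induction showing $b_1 = b_2 = \cdots = b_{2m-1} = 0$ and then substitutes $p_Y(t) = 1 + t^{2m}$ back into the identity to get a contradiction; you instead stop at the $t^4$-coefficient, obtain $b_2(Y)=0$, and immediately contradict $b_2(Y)\ge 1$ via the hyperplane class. That shortcut is valid and avoids some slips in the paper's argument (the paper writes $\tfrac12 p_Y(t^2)$ rather than your correct $\tfrac12 P_Y(-t^2)$, and its induction does not actually force $b_m=0$).

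Where you are too optimistic is in the low-dimensional cases. The obstruction you flag for $m\le 2$ is not a proof difficulty but a genuine failure of the statement: the lemma is \emph{false} for $m=1$ and $m=2$. For $m=1$, a smooth plane cubic $E\subset\mathbb{P}^2$ contains no lines, yet $E^{(2)}$ is a $\mathbb{P}^1$-bundle over $E$ and hence $[E^{(2)}]=(1+\mathbb{L})[E]$, which is exactly the $Y$--$F(Y)$ relation with $F(Y)=\emptyset$. For $m=2$, the Veronese surface $\mathbb{P}^2\hookrightarrow\mathbb{P}^5$ contains no lines, and a direct computation gives $[(\mathbb{P}^2)^{(2)}]=1+\mathbb{L}+2\mathbb{L}^2+\mathbb{L}^3+\mathbb{L}^4=(1+\mathbb{L}^2)[\mathbb{P}^2]$, again the relation with $F(Y)=\emptyset$. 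So no ``supplementary geometric input'' will rescue those cases. This does not affect the paper's main results, since the lemma is only invoked under the standing hypothesis $m\ge 7$ of Theorem~\ref{resyfy}; your argument covers that range completely.
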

	
	\begin{proof}
		Suppose that $F(Y) = \emptyset$. In order for $Y$ to satisfy the $Y-F(Y)$ relation, we would need to have $[Y^{(2)}] = (1 + \mathbb{L}^d)[Y]$, which means that $p_{Y^{(2)}}(t) = \frac{1}{2} p_Y(t)^2 + \frac{1}{2} p_Y(t^2) = (1 + t^{2d}) p_Y(t)$. \\
		
		Writing $p_Y(t) = t^{2m} + b_{2m - 1} t^{2m - 1} + \ldots + b_1 t + b_0$, this would imply that $b_1 = b_2 = \cdots  = b_{2d - 1} = 0$. We can prove this by induction. The coefficient of $t^2$ in $\frac{1}{2} p_Y(t)^2 + \frac{1}{2} p_Y(t^2)$ is $\frac{1}{2} ((b_1^2 + 2b_0 b_2) + b_1)$ and its coefficient in $(1 + t^{2m}) p_Y(t)$ is $b_2$. Since $b_0 = 1$, this means that $b_1^2 + 2b_2 + b_1 = 2b_2 \Rightarrow b_1^2 + b_1 = 0$. Since $b_i \ge 0$ for each $i$, this implies that $b_1 = 0$. Suppose that $b_1 = b_2 = \cdots = b_{k - 1} = 0$ for some $k \le 2m - 1$. Consider the coefficient of $t^{2k}$ on each side. This implies that $\frac{1}{2} (b_k^2 + 2 b_0 b_{2k} + 2 b_1 b_{2k - 1} + \ldots + 2 b_{k - 1} b_{k + 1} + b_k) = b_{2k}$. Since $b_0 = 1$ and $b_1 = b_2 = \cdots  = b_{k - 1} = 0$, this means that $b_k^2 + 2 b_{2k} + b_k = 2 b_{2k} \Rightarrow b_k^2 + b_k = 0$. Since $b_k \ge 0$, this implies that $b_k = 0$. Thus, we have that $b_1 = b_2 = \cdots  = b_{2m - 1} = 0$ and $p_Y(t) = t^{2m}$. \\
		
		Substituting this back into $p_{Y^{(2)}}(t) = \frac{1}{2} p_Y(t)^2 + \frac{1}{2} p_Y(t^2) = (1 + t^{2m}) p_Y(t)$ gives $t^{4m} + t^{4m} = 2(1 + t^{2m}) \cdot t^{2m}$, which is clearly false. 
	\end{proof}
	
	Afterwards, the initial reduction is in determining $\dim F(Y)$. This involves looking at degrees of terms in the Poincar\'e polynomials.

	\begin{lem} \label{reldim}
		If a variety $Y \subset \mathbb{P}^n$ in Theorem \ref{resyfy} satisfies the $Y-F(Y)$ relation, then $\dim F(Y) = 2m - 4$.
	\end{lem}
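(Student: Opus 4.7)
The plan is to pass to Poincaré polynomials via the ring homomorphism $K_0(\Var_k)\to\mathbb{Z}[t]$ and read off $\dim F(Y)$ from the top degree of the resulting polynomial identity, after first using a low-degree match to extract $b_1(Y)=0$. Using the $Y^{(2)}$-form $[Y^{(2)}] = (1+\mathbb{L}^m)[Y] + \mathbb{L}^2[F(Y)]$ of the relation and the standard identity $p_{Y^{(2)}}(t) = \tfrac{1}{2}(p_Y(t)^2 + p_Y(t^2))$ for a smooth projective variety, the $Y$--$F(Y)$ relation becomes
\[ \tfrac{1}{2}\bigl(p_Y(t)^2 + p_Y(t^2)\bigr) - (1+t^{2m})\,p_Y(t) \;=\; t^4 \, p_{F(Y)}(t), \]
and everything that follows is coefficient-matching.

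For the bottom end, the right-hand side vanishes below $t^4$, so the $t^2$-coefficient on the left must be $0$. Expanding $p_Y(t) = 1 + b_1 t + b_2 t^2 + \cdots$ gives this coefficient as $\tfrac{b_1(b_1+1)}{2}$, which, since each $b_i$ is a non-negative integer, forces $b_1 = 0$. Poincaré duality on the smooth projective variety $Y$ then gives $b_{2m-1} = 0$ as well.

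For the top end, write $p_Y(t) = t^{2m} + b_{2m-1} t^{2m-1} + b_{2m-2} t^{2m-2} + \cdots$ and match coefficients of $t^{4m-k}$ for $k=0,1,2,3,4$. The coefficients of $t^{4m}$ and $t^{4m-1}$ cancel identically; the coefficients of $t^{4m-2}$ and $t^{4m-3}$ simplify to $\tfrac{b_{2m-1}(b_{2m-1}+1)}{2}$ and $b_{2m-1}\,b_{2m-2}$ respectively, both of which vanish once $b_{2m-1}=0$; and the coefficient of $t^{4m-4}$ simplifies, after the $b_{2m-4}$ terms cancel between $p_Y(t)^2$ and $(1+t^{2m})p_Y(t)$, to
\[ b_{2m-1}\,b_{2m-3} + \frac{b_{2m-2}(b_{2m-2}+1)}{2} \;=\; \frac{b_{2m-2}(b_{2m-2}+1)}{2}. \]
Since $Y\subset \mathbb{P}^n$ is nondegenerate, irreducible, smooth and of positive dimension, the restriction of the hyperplane class is a nonzero ample class in $H^2(Y)$, so $b_{2m-2} = b_2 \ge 1$ by Poincaré duality, and this coefficient is strictly positive. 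Hence $t^4 p_{F(Y)}(t)$ has top degree exactly $4m-4$, yielding $\dim F(Y) = 2m-4$.

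The argument is almost entirely bookkeeping; the only non-tautological input is the strict inequality $b_{2m-2}\ge 1$, which is supplied by nondegeneracy of the embedding. The main (minor) obstacle is simply organizing the expansion of $\tfrac{1}{2}(p_Y(t)^2 + p_Y(t^2)) - (1+t^{2m})p_Y(t)$ carefully enough so that the vanishings in degrees $4m$ through $4m-3$ and the positivity in degree $4m-4$ both fall out cleanly, which is exactly the role played by the preliminary extraction $b_1=b_{2m-1}=0$.
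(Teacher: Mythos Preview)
Your proof is correct and follows essentially the same route as the paper: pass to Poincar\'e polynomials, extract $b_1=0$ from the vanishing of the $t^2$ coefficient, and then check that the left-hand side has top degree exactly $4m-4$ by verifying the coefficients of $t^{4m},\ldots,t^{4m-3}$ vanish while the $t^{4m-4}$ coefficient equals $\tfrac{b_{2m-2}(b_{2m-2}+1)}{2}$.

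The one notable difference is how you establish $b_2\neq 0$. The paper first invokes a separate lemma (Lemma~\ref{noemp}) to show $F(Y)\neq\emptyset$, then reads $b_0(F(Y))=\tfrac{b_2(b_2+1)}{2}$ off the $t^4$ coefficient to force $b_2\neq 0$. You instead observe directly that the hyperplane class gives a nonzero element of $H^2(Y)$, so $b_2\ge 1$; this is cleaner and avoids the auxiliary lemma entirely. (In fact nondegeneracy is not even needed for this step---any positive-dimensional smooth projective variety carries an ample class---but the argument is valid as written.)
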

	
	\begin{proof}
		Before substituting in Poincar\'e polynomials, we will rewrite $[Y^{[2]}] = [Y][\mathbb{P}^m] + \mathbb{L}^2 [F(Y)]$ as $[Y^{[2]}] - [Y][\mathbb{P}^m] = \mathbb{L}^2 [F(Y)]$. Since $Y^{[2]}$ is the blowup of $Y^{(2)}$ along the diagonal $\Delta \cong Y$, we have that $[Y^{[2]}] = [Y^{(2)}] + ([\mathbb{P}^{m - 1}] - 1)[Y]$. Since $Y^{(2)}(t) = \frac{1}{2} p_Y(t)^2 + \frac{1}{2} p_Y(t^2)$ (p. 188 of \cite{Ch} with $x = y = t$), the fact that $p_{\mathbb{P}^{m - 1}}(t) = 1 + t^2 + \ldots + t^{2m - 2}$ and $p_{\mathbb{A}^1}(t) = t^2$ implies that	
		\begin{align*}
			p_{Y^{[2]}}(t) - p_Y(t) p_{\mathbb{P}^m}(t) &= p_Y^{(2)}(t) + (t^2 + \ldots + t^{2m - 2}) p_Y(t) - (1 + t^2 + \ldots + t^{2m}) p_Y(t) \\
			&= \frac{1}{2} p_Y(t)^2 + \frac{1}{2} p_Y(t^2) - (1 + t^{2m}) p_Y(t).
		\end{align*}
		
		If the $Y-F(Y)$ relation holds, then \[ \frac{1}{2} p_Y(t)^2 + \frac{1}{2} p_Y(t^2) - (1 + t^{2m}) p_Y(t) = t^4 p_{F(Y)}(t). \]
		
		Since each side is a multiple of $t^4$, this means that the $t^2$ term on each side is equal to $0$. Note that $b_0 = b_{2m} = 1$ since $Y$ is connected. On the left hand side, this means that 
		\begin{align*}
			\frac{1}{2}(b_0 b_2 + b_1^2 + b_2 b_0) + \frac{1}{2} b_1 - b_2 &= \frac{1}{2}(2b_0 b_2 + b_1^2) + \frac{1}{2} b_1 - b_2 \\
			&= \frac{1}{2} (2b_2 + b_1^2) + \frac{1}{2} b_1 - b_2 \\
			&= b_2 + \frac{b_1^2}{2} + \frac{1}{2} b_1 - b_2 \\
			&= \frac{b_1^2}{2} + \frac{1}{2} b_1 \\
			&= 0 \\
			\Rightarrow b_1 &= 0.
		\end{align*}
		
		Poincar\'e duality implies that $b_{2m - 1} = 0$. \\
		
		Also, we have that the coefficient of $t^4$ is nonzero since $F(Y) \ne \emptyset$ by Lemma \ref{noemp}. Then, the coefficient of $t^4$ (given by $b_0(F(Y))$) is equal to 
		\begin{align*}
			\frac{1}{2}(2b_0 b_4 + 2b_1 b_3 + b_2^2) + \frac{1}{2} b_2 - b_4 &= b_0 b_4 + b_1 b_3 + \frac{b_2^2}{2} + \frac{b_2}{2} - b_4 \\
			&= b_4 + 0 + \frac{b_2^2}{2} + \frac{b_2}{2} - b_4 \\
			&= \frac{b_2^2}{2} + \frac{b_2}{2} \\
			&\ne 0 \\
			\Rightarrow b_2 &\ne 0.
		\end{align*}
		
		We use this to look at the coefficients of $t^{4m}, t^{4m - 1}, t^{4m - 2}, t^{4m - 3}$, and $t^{4m - 4}$. Since $\frac{1}{2} p_Y(t^2)$ can only contribute to even degree terms, we first look at the odd degree terms. The coefficient of $t^{4m - 1}$ on the left hand side is $\frac{1}{2}(2b_{2m - 1} b_{2m}) - b_{2m - 1} = b_{2m - 1} - b_{2m - 1} = 0$ since $4m - 1 > 2m$ when $m \ge 4$ and $Y$ is connected. This means that the coefficient of $t^{4m - 1}$ is $0$. Similarly, the coefficient of $t^{4m - 3}$ is equal to $\frac{1}{2}(2b_{2m - 3} b_{2m} + 2b_{2m - 2} b_{2m - 1}) - b_{2m - 3} = b_{2m - 3} - b_{2m - 3} = 0$ since $b_{2m - 1} = b_1 = 0$. \\
		
		Next, we find the coefficients of $t^{4m}, t^{4m - 2}$, and $t^{4m - 4}$. The coefficient of $t^{4m}$ is $\frac{1}{2} b_{2m}^2 + \frac{1}{2} b_{2m} - b_{2m} = \frac{1}{2} + \frac{1}{2} - 1 = 0$. The coefficient of $t^{4m - 2}$ is  
		\begin{align*}
			\frac{1}{2}(2b_{2m - 2} b_{2m} + b_{2m - 1}^2) + \frac{1}{2} b_{2m - 1} - b_{2m - 2} &= b_{2m - 2} b_{2m} + \frac{1}{2} b_{2m - 1}^2 + \frac{1}{2} b_{2m - 1} - b_{2m - 2} \\ 
			&= b_{2m - 2} + 0 + 0 - b_{2m - 2} \\ 
			&= 0
		\end{align*}
		since $b_{2m} = 1$ and $b_{2m - 1} = b_1 = 0$. \\
		
		However, the coefficient of $t^{4m - 4}$ is nonzero since it is equal to 
		\begin{align*}
			\frac{1}{2}(2b_{2m - 4} b_{2m} + 2b_{2m - 3} b_{2m - 1} + b_{2m - 2}^2) + \frac{1}{2} b_{2m - 2} - b_{2m - 4} &= \frac{1}{2}(2b_{2m - 4} + b_{2m - 2}^2) + \frac{1}{2} b_{2m - 2} - b_{2m - 4} \\
			&= b_{2m - 4} + \frac{1}{2} b_{2m - 2}^2 + \frac{1}{2} b_{2m - 2} - b_{2m - 4} \\
			&= \frac{1}{2} b_{2m - 2}^2 + \frac{1}{2} b_{2m - 2}  \\
			&= \frac{1}{2} b_2^2 + \frac{1}{2} b_2 \\
			&\ne 0.
		\end{align*}
		
		This implies that $\deg t^4 p_{F(Y)}(t) = 4m - 4$ and $\deg p_{F(Y)} = 4m - 8 \Rightarrow \dim F(Y) = 2m - 4$ since $F(Y)$ is projective (and therefore compact). 
	\end{proof}

	The proof of Lemma \ref{reldim} and Remark \ref{conjconseq} imply the following:
	
	\begin{cor} \label{gencubic}
		Suppose that $Y \subset \mathbb{P}^n$ is a hypersurface generic among those of its degree. If $Y$ satisfies the $Y-F(Y)$ relation, then $Y$ is a cubic hypersurface. 
	\end{cor}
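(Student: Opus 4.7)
The plan is to combine the dimension computation already established in the proof of Lemma \ref{reldim} with the classical fact that a generic hypersurface has a Fano scheme of lines of the expected dimension. The argument will be short: each fact pins down $\dim F(Y)$ in terms of different data, and equating them forces $d = 3$.

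First, I would observe that the proof of Lemma \ref{reldim} only manipulates Poincar\'e polynomial coefficients and applies Poincar\'e duality; it does not invoke any of the more restrictive numerical hypotheses of Theorem \ref{resyfy}. Hence, for any smooth connected projective variety $Y \subset \mathbb{P}^n$ of dimension $m \ge 1$ satisfying the $Y-F(Y)$ relation, the conclusion $\dim F(Y) = 2m - 4$ still holds. A generic degree-$d$ hypersurface is smooth by Bertini and connected for $m \ge 1$, so this gives $\dim F(Y) = 2m - 4$; nonemptiness of $F(Y)$ comes from Lemma \ref{noemp}.

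Next, I would recall the expected dimension of the Fano scheme of lines on a hypersurface. A line $\ell \subset \mathbb{P}^{m+1}$ lies on $Y = V(f)$ of degree $d$ if and only if the restriction $f|_\ell$, a binary form of degree $d$, vanishes identically, which imposes $d+1$ conditions on $\mathbb{G}(1, m+1)$. Hence the expected dimension of $F(Y)$ is $2m - d - 1$. By the generic case of the Debarre--de Jong conjecture, referenced in Remark \ref{conjconseq} via (the arXiv version of) \cite{Can}, this expected dimension is attained for $Y$ generic among hypersurfaces of degree $d$, whenever the expected dimension is nonnegative. Equating the two computations gives
\[
2m - 4 = 2m - d - 1,
\]
so $d = 3$, which is the desired conclusion.

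The only point requiring care, and thus the main (mild) obstacle, is confirming that Lemma \ref{reldim}'s dimension conclusion really is valid under just the ``generic hypersurface'' hypothesis rather than the full list of assumptions of Theorem \ref{resyfy}. This amounts to rereading its proof: the Poincar\'e polynomial argument uses only $b_0 = b_{2m} = 1$ (connectedness) and the $Y-F(Y)$ relation, so no additional hypotheses are needed. Once this is checked, the corollary follows immediately from the two dimension formulas.
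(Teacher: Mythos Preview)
Your proposal is correct and matches the paper's own (terse) justification, which simply cites Lemma \ref{reldim} together with Remark \ref{conjconseq}: you extract $\dim F(Y) = 2m - 4$ from the Poincar\'e polynomial computation and equate it with the expected dimension $2m - d - 1$ attained for generic hypersurfaces, forcing $d = 3$. Your added remark that Lemma \ref{reldim}'s proof uses only connectedness and the $Y-F(Y)$ relation, not the stronger numerical hypotheses of Theorem \ref{resyfy}, is exactly the check one needs to make and is handled correctly.
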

	
	Combining the work above with a result of Rogora \cite{R}, we now obtain a restriction on the codimension of $Y$ in $\mathbb{P}^n$ which gives Theorem \ref{resyfy} after combining this with results on Hartshorne's conjecture. 
	
	\begin{proof}(Proof of Theorem \ref{resyfy}) \\
		Since $\dim F(Y) = 2m - 4$ by Lemma \ref{reldim}, the initial conditions of Theorem  \ref{resyfy} and the following result of Rogora \cite{R} imply that the codimension of $Y$ in $\mathbb{P}^n$ is $\le 2$. \\
		
		\begin{thm} (Rogora, Theorem 2 on p. 207 -- 208 of \cite{R}, Theorem 2' on p. 209 of \cite{R}) \\
			Let $k \ge 4$ and $X \subset \mathbb{P}^n$ be a $k$-dimensional irreducible subvariety of a projective space of dimension $n$. Let $\Sigma \subset \mathbb{G}(1, n)$ be a component of maximal dimension of the variety of lines containe din $X$. If $\dim \Sigma = 2k - 4$, one of the following holds:
			
			\begin{enumerate}
				\item $X$ is a $1$-dimensional infinite family of quadrics
				
				\item $X$ is a $2$-dimensional infinite family of projective $(k - 2)$-planes
				
				\item $X$ is a linear section of $\mathbb{G}(1, 4)$
				
				\item $\dim X \ge n - 2$
			\end{enumerate}
			
		\end{thm}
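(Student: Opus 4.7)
Plan: Rogora's theorem is a geometric classification of irreducible $k$-dimensional varieties $X \subset \mathbb{P}^n$ whose Fano variety of lines has a component $\Sigma$ of the borderline dimension $2k - 4$. I would approach it through the standard incidence correspondence for varieties covered by lines. Let $I = \{(p, \ell) \in X \times \Sigma : p \in \ell\}$, with projections $\pi_X : I \to X$ and $\pi_\Sigma : I \to \Sigma$. Since $\pi_\Sigma$ is a $\mathbb{P}^1$-bundle, $\dim I = 2k - 3$. A preliminary step verifies that $\Sigma$ is a covering family: otherwise the lines of $\Sigma$ sweep out a proper subvariety $X' \subsetneq X$, and standard dimension bounds on Fano varieties of lower-dimensional irreducible varieties contradict the maximality of $\dim \Sigma = 2k - 4$. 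Once $\Sigma$ covers $X$, the general fiber of $\pi_X$ has dimension $k - 3$, so through a general $p \in X$ there passes a $(k-3)$-dimensional family $\Sigma_p$ of lines of $\Sigma$.

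Next, I would study the infinitesimal geometry at $p$. Sending each line through $p$ to its tangent direction embeds $\Sigma_p \hookrightarrow \mathbb{P}(T_p X) \cong \mathbb{P}^{k-1}$ as a codimension-$2$ subvariety, and the union of these lines is a $(k-2)$-dimensional cone $\mathcal{C}_p \subset X$ with vertex $p$. The core task is then to classify irreducible $k$-dimensional varieties carrying such a cone at every general point. For this I would analyze a general line $\ell \in \Sigma$ via its normal bundle $N_{\ell/X}$: since $T_\ell \Sigma = H^0(N_{\ell/X})$, the hypothesis $\dim \Sigma = 2k - 4$ together with $\ell \cong \mathbb{P}^1$ constrains the splitting type of $N_{\ell/X}$, leaving only a short list of possibilities.

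For each admissible splitting type, I would reconstruct the local geometry of $\mathcal{C}_p$ and then globalize along $\Sigma$. A largely trivial splitting forces $\mathcal{C}_p$ to be a linear subspace, so that $X$ is swept out by a two-dimensional family of $(k-2)$-planes (case 2). A splitting with exactly one positive summand forces $\mathcal{C}_p$ to be a quadric cone and $X$ to be a one-dimensional family of quadrics (case 1). A remaining exceptional splitting type identifies $X$ with a linear section of the Grassmannian $\mathbb{G}(1, 4)$ in its Pl\"ucker embedding (case 3). In every other case, tangential projection of $X$ from $\ell$ (or from a general $(k-2)$-plane in $\mathcal{C}_p$) contracts enough to force the image to have dimension at most $k - 2$, which in turn forces $\codim X \le 2$ in $\mathbb{P}^n$ (case 4).

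The main obstacle is the classification step: enumerating the splitting types of $N_{\ell/X}$ compatible with $\dim \Sigma = 2k - 4$, and, crucially, identifying the exceptional case with $\mathbb{G}(1,4)$ rather than some other sporadic configuration. Ruling out intermediate splittings and spurious exceptional examples requires a delicate infinitesimal argument using the second fundamental form of $X$ along lines of $\Sigma$, combined with cohomological vanishing statements on $\ell \cong \mathbb{P}^1$ that show any deviation produces a component of the Fano variety of dimension strictly greater than $2k - 4$, contradicting maximality. This is the technical heart of Rogora's original argument, and any complete proof must reproduce it.
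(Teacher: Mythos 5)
The paper does not prove this statement: it is quoted verbatim as an external result (Rogora, Theorem 2 and Theorem 2$'$ of \cite{R}) inside the proof of Theorem \ref{resyfy}, so there is no in-paper argument to compare yours against. The question is therefore whether your sketch would stand on its own, and it would not. You explicitly defer the ``technical heart'' --- enumerating the splitting types of $N_{\ell/X}$ compatible with $\dim \Sigma = 2k - 4$, ruling out spurious exceptional configurations, and identifying the exceptional case with a linear section of $\mathbb{G}(1,4)$ --- to ``Rogora's original argument.'' That deferred step \emph{is} the theorem; everything you do carry out (the incidence correspondence, $\dim I = 2k - 3$, the $(k-3)$-dimensional family of lines through a general point, the cone $\mathcal{C}_p$ of dimension $k-2$) is routine setup that does not yet separate the four cases. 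As written this is a plan for a proof, not a proof.

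There is also a concrete error in your preliminary reduction. You claim that if $\Sigma$ is not a covering family, then standard dimension bounds on the swept-out subvariety $X' \subsetneq X$ contradict $\dim \Sigma = 2k - 4$. They do not: if $X$ contains a linear $\mathbb{P}^{k-1}$, the lines lying in that plane already form an irreducible family of dimension $2(k-1) - 2 = 2k - 4$, so a non-covering component of exactly the critical dimension can exist with no contradiction. Handling or excluding such components is part of what any complete argument must do, and your reduction silently discards them. A smaller caveat: the identification $T_{[\ell]}\Sigma = H^0(N_{\ell/X})$ is only an inclusion of the tangent space into $H^0$ in general, and $N_{\ell/X}$ need not be locally free if $X$ is singular along $\ell$, so even the constraint on splitting types requires more care than you indicate for an arbitrary irreducible $X$.
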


		We split the remaining cases into hypersurfaces and codimension $2$ varieties. \\
		
		Suppose that $Y$ is a hypersurface. Since $Y^{[2]}$ and $Y$ are both smooth and projective, Larsen--Lunts' stable birational equivalence result implies that $Y$ must be uniruled (Theorem 6.1.5 on p. 134 of \cite{CNS}, Corollary 2.6.3 on p. 476 of \cite{CNS} with $Z = \mathbb{P}^r$). This implies that $\deg Y \le n$ if $Y$ is a hypersurface. In this case, a result of Beheshti--Riedl \cite{BR} implies that $\dim F(Y)$ is equal to the ``expected dimension'' $2(m + 1) - d - 3 = 2m - d - 1$ if $n \ge 2 \deg Y - 4$ (Theorem 1.3 of \cite{BR}): 
		
		\begin{thm} \label{brresult} (Beheshti--Riedl, Theorem 1.3 on p. 2 of \cite{BR}) \\
			Let $X \subset \mathbb{P}^n$ be a smooth hypersurface. Then $F_k(X)$ will be irreducible of the expected dimension provided that \[ n \ge 2 \binom{d + k - 1}{k} + k. \]  
			
			In the special case $k = 1$, we can improve the bound, proving that $F_1(X)$ is of the expected dimension $2n - d - 3$ if $n \ge 2d - 4$ and irreducible if $n \ge 2d - 1$ and $n \ge 4$.
		\end{thm}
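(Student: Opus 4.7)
The plan is to prove the Beheshti--Riedl bound via a combination of incidence-correspondence dimension counts (which give the lower bound on $\dim F_k(X)$ for free) and a normal-bundle rigidity argument (which gives the harder upper bound). Throughout, let $e := (k+1)(n-k) - \binom{d+k}{k}$ denote the expected dimension, and let $N = \binom{n+d}{d} - 1$ be the dimension of the parameter space of degree-$d$ hypersurfaces in $\mathbb{P}^n$.

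First, I would set up the universal incidence $\mathcal{I} = \{(X, \Lambda) \in \mathbb{P}^N \times \mathbb{G}(k,n) : \Lambda \subset X\}$. Projecting to $\mathbb{G}(k,n)$, the fiber over $[\Lambda]$ is the linear subspace of $\mathbb{P}^N$ cut out by the $\binom{d+k}{k}$ coefficients of the restriction of the defining polynomial to $\Lambda$; hence $\dim \mathcal{I} = N + e$. Since $F_k(X)$ is cut out locally in $\mathbb{G}(k,n)$ by $\binom{d+k}{k}$ equations (the coefficients of $f|_\Lambda$), every nonempty component has dimension $\geq e$, so only the upper bound is at issue. Exhibiting a Fermat-type hypersurface whose Fano scheme has the expected dimension shows that the projection $\mathcal{I} \to \mathbb{P}^N$ is dominant whenever $e \geq 0$, which is the case under the numerical hypothesis $n \geq 2\binom{d+k-1}{k} + k$.

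Next, I would control the upper bound via the excess locus $\mathcal{I}_{>e} := \{(X,\Lambda) \in \mathcal{I} : \dim_{[\Lambda]} F_k(X) > e\}$. For a pair $(X, \Lambda) \in \mathcal{I}$, the tangent space $T_{[\Lambda]} F_k(X) = H^0(\Lambda, N_{\Lambda/X})$, where the normal bundle fits in
\[ 0 \to N_{\Lambda/X} \to N_{\Lambda/\mathbb{P}^n} \to N_{X/\mathbb{P}^n}|_\Lambda \to 0. \]
For $\Lambda \cong \mathbb{P}^k$, the middle term is $\mathcal{O}(1)^{\oplus(n-k)}$ and the right-hand term is $\mathcal{O}(d)|_\Lambda$. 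The excess dimension then forces the cokernel map $H^0(\Lambda,\mathcal{O}(1))^{\oplus(n-k)} \to H^0(\Lambda,\mathcal{O}(d))$ to drop rank; estimating the codimension of this drop inside the fiber of $\mathcal{I} \to \mathbb{G}(k,n)$ gives a codimension bound on $\mathcal{I}_{>e}$ in $\mathcal{I}$. Comparing this codimension with $\dim \mathbb{P}^N$ yields the crude inequality that suffices for the first statement.

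The main obstacle, and where the improved $k=1$ bound comes from, is refining this codimension estimate for lines. For a line $L \subset X$, the bundle $N_{L/X}$ splits as $\bigoplus_{i=1}^{n-2}\mathcal{O}_L(a_i)$ with $\sum a_i = n-1-d$, and the excess $h^0(N_{L/X}) - (2n-d-3)$ equals $\sum_i \max(0, a_i+1) - \sum_i (a_i+1)$, which is controlled by the number of negative $a_i$. For a smooth hypersurface $X$, I would rule out having many negative summands by letting $L$ vary in a hypothetical excess component, producing a ruled subvariety $S \subset X$; smoothness of $X$ restricts the Chern classes and degree of $S$, and iterating the normal bundle sequence $0 \to N_{L/S} \to N_{L/X} \to N_{S/X}|_L \to 0$ forces $n < 2d-4$. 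For the irreducibility claim under $n \geq 2d-1$ (and $n \geq 4$), I would argue by a connectedness/monodromy argument: a Lefschetz-type theorem applied to the Plücker-embedded Fano scheme, combined with the now-established dimension equality, gives that $F_1(X)$ is connected, and smoothness at a generic point then upgrades this to irreducibility. The hardest and most delicate step is the sharp normal-bundle pigeonholing that produces the exact constant $2d-4$ rather than the naive $2d$.
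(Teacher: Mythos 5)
This statement is not proved in the paper at all: it is quoted verbatim as an external input (Theorem 1.3 of \cite{BR}) inside the proof of Theorem \ref{resyfy}, so there is no ``paper's own proof'' to compare against. Your proposal is therefore an attempt to reprove Beheshti--Riedl from scratch, and while the skeleton (incidence correspondence for the lower bound, normal-bundle/excess-locus analysis for the upper bound, connectedness plus generic smoothness for irreducibility) matches the standard architecture of such arguments, the parts that carry the actual content are left as black boxes or contain errors.

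Two concrete problems. First, the Fermat hypersurface is the wrong witness for dominance of $\mathcal{I}\to\mathbb{P}^N$: for $d\ge 3$ it is the classical example of a smooth hypersurface whose Fano scheme of lines has \emph{larger} than expected dimension (e.g.\ the Fermat quintic threefold has $\dim F_1 = 2$ versus expected dimension $0$), so it cannot be used to show the general fiber has dimension $e$. Second, and more seriously, the entire difficulty of the theorem is concentrated in the step you describe as ``letting $L$ vary in a hypothetical excess component, producing a ruled subvariety $S\subset X$ \dots iterating the normal bundle sequence forces $n<2d-4$.'' That sentence is the theorem; nothing in your sketch explains how smoothness of $X$ constrains the splitting type of $N_{L/X}$ along an excess component, nor where the specific constant $2d-4$ (as opposed to, say, $2d$ or $d^2$) comes from. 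Beheshti--Riedl's actual argument runs through a careful induction on the dimension of the subvariety swept out by lines through a point, and none of that machinery appears here. As written, the proposal establishes only the easy lower bound $\dim F_k(X)\ge e$ and a correct statement of what would need to be shown.
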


		Since $n \ge 2$ and the ``expected dimension'' is equal to $2m - 4$ if and only if $d = 3$ , $Y$ must be a cubic hypersurface. \\
		
		It remains to consider the codimension $2$ case. If $Y$ is a complete intersection, the uniruledness restriction implies that $d_1 + d_2 \le n$ (Section 4.4 on p. 99 of \cite{D}). For example, $Y \subset \mathbb{P}^n$ is a complete intersection if it is defined scheme-theoretically by $\le \frac{n}{2}$ equations (p. 588 of \cite{BEL}, \cite{Fal}). Suppose that these equations have degree $\le \frac{n}{2}$ as in condition b in Theorem \ref{resyfy}. Let $Z_i$ be the hypersurface of degree $d_i$. Since $Y \subset \mathbb{P}^n$ has codimension $2$, we have that $n = m + 2$. Since $d_i \le \frac{n}{2}$, Theorem \ref{brresult} applies and $\dim F(Z_i) = 2(m + 2) - d_i - 3 = 2m - d_i + 1$ (i.e. the expected dimension). If $d_i \ge 6$ for \emph{either} of $i = 1, 2$, then $\dim F(Y_i) \le 2m - 5$ and $\dim F(Y) \le \dim F(Y_i) \le 2m - 5$. This would make it impossible for $Y$ to satisfy the $Y-F(Y)$ relation. Thus, a codimension $2$ complete intersection of hypersurfaces of degree $(d_1, d_2)$ can only satisfy the $Y-F(Y)$ relation only if $d_1, d_2 \le 5$. \\ 
		
		We can further narrow down the remaining degree cases using a result of Canning (Theorem 1.3 on p. 2128 of \cite{Can}), which implies that the Debarre--de Jong conjecture holds for Fano complete intersections $(d_1, d_2)$ with $d_1 + d_2 \le 7$. In other words, $F(Y)$ has the expected dimension $2n - d_1 - d_2 - 4$ if $d_1 + d_2 \le 7$. In order to have $\dim F(Y) = 2m - 4$, we need $d_1 + d_2 = 4$. Since we assumed that $Y$ is nondegenerate (i.e. not contained in a hyperplane), we have that $d_i \ge 2$ for $i = 1, 2$ and the only instance where $d_1 + d_2 = 4$ is $d_1 = d_2 = 2$. It remains to study the cases where $d_1 + d_2 \ge 8$ and $d_1, d_2 \le 5$. These are the pairs $(d_1, d_2) = (3, 5), (4, 4), (4, 5)$. \\
		
		Let $Z_i$ be a hypersurface of degree $d_i$ for $i = 1, 2$. Since $n \ge m \ge 7 > 2(5) - 4 = 6$, Theorem \ref{brresult} implies that Fano varieties of lines of hypersurfaces of degree $5$ in $\mathbb{P}^n$ have the expected dimension $2m - 4$. Suppose that $d_2 = 5$. If $F(Z_1) \cap F(Z_2)$ is a nontrivial intersection, $Y = Z_1 \cap Z_2$ does \emph{not} satisfy the $Y-F(Y)$ relation. We can show that this is indeed the case when $d_1 = 3, 4, 5$. If the intersection $F(Z_1) \cap F(Z_2)$ is trivial, then we either have $F(Z_1) \subset F(Z_2)$ or $F(Z_2) \subset F(Z_1)$. A dimension count implies that the only possibility is $F(Z_2) \subset F(Z_1)$. However, the fact that $Z_1$ and $Z_2$ are covered by lines implies that $Z_2 \subset Z_1$, which is impossible if $Z_1$ intersects $Z_2$ transversely. Thus, the only possible degrees $(d_1, d_2)$ of codimension 2 varieties satisfying the $Y-F(Y)$ relation are $(2, 2)$ and $(4, 4)$. \\

		When $\deg Y \ll n$, existing results on Hartshorne's conjecture actually force $Y$ to be a complete intersection. If the codimension of $Y$ in $\mathbb{P}^n$ is 2,  it suffices to take $\deg Y \le \frac{n}{4}$ in the codimension 2 case by the following result of Bertram--Ein--Lazarsfeld \cite{BEL}. The reasoning above on the codimension 2 case can then be repeated.
		
		\begin{thm} (Bertram--Ein--Lazarsfeld, Corollary 3 on p. 588 of \cite{BEL})
			Assume that $X \subset \mathbb{P}^r$ is a smooth variety of degree $d$, dimension $n$, and codimension $e$. If \[ d \le \frac{r}{2e} \left[ = \frac{n}{2e} + \frac{1}{2} \right], \] then $X$ is a complete intersection.
		\end{thm}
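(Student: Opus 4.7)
The plan is to deduce this as a consequence of Bertram--Ein--Lazarsfeld's main regularity theorem (Theorem 1 of \cite{BEL}), which bounds the Castelnuovo--Mumford regularity of the ideal sheaf $\mathcal{I}_X$ of a smooth irreducible projective variety purely in terms of its degree $d$ and codimension $e$, with no dependence on the ambient $r$. The typical bound is of the form $\operatorname{reg}(\mathcal{I}_X) \le (d-1)e + 1$, obtained by applying Kodaira-type vanishing to a carefully chosen twist of the Koszul complex built from secant/tangent constructions on $X$. I would take this regularity statement as a black box.

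Granted such a bound, the homogeneous ideal $I_X$ is generated by forms of degree at most $(d-1)e+1$. The hypothesis $d \le r/(2e)$ keeps this threshold well below $r$, so the generators live in a tightly restricted range of degrees. I would then take a minimal generating set $f_1, \ldots, f_N$ with $d_1 \le \cdots \le d_N$ and, using a generic Bertini argument (valid since $\operatorname{char} k = 0$), select $e$ of these generators forming a regular sequence. These cut out a complete intersection $Z \supseteq X$ of codimension $e$ and degree $d_1 \cdots d_e$. Comparing degrees via the containment $\deg(X) \le \deg(Z)$, the hypothesis $d \le r/(2e)$ should be sharp enough to force $d_1 \cdots d_e \le d$, giving equality of degrees and hence $X = Z$ as schemes by reducedness and smoothness.

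The main obstacle is this last step: ruling out additional minimal generators of higher degree (which would make $N > e$), or equivalently showing that the chosen regular sequence $f_1, \ldots, f_e$ already generates all of $I_X$ and not merely a subideal. This is where the \emph{sharp} form of the Bertram--Ein--Lazarsfeld regularity estimate, together with a Hilbert function or Koszul resolution count linking generator counts to $d$ and $e$, must be used in its full strength. In effect, the bulk of the work is hidden inside the regularity theorem one invokes, while the corollary itself reduces to a careful degree/codimension bookkeeping argument.
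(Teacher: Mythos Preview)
The paper does not prove this statement: it is quoted from \cite{BEL} as a black-box input inside the proof of Theorem~\ref{resyfy}, so there is no in-paper argument to compare against. For context, the proof in \cite{BEL} itself goes through Faltings' criterion \cite{Fal} (which the present paper also invokes a few lines above the quoted theorem): a smooth $X \subset \mathbb{P}^r$ cut out scheme-theoretically by at most $r/2$ hypersurfaces is automatically a complete intersection. The Bertram--Ein--Lazarsfeld vanishing theorem controls the degrees, and thereby the number, of defining equations; the hypothesis $d \le r/(2e)$ is precisely what pushes that number below the Faltings threshold $r/2$.

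Your proposed route has a genuine gap, and it is not mere bookkeeping. Picking $e$ minimal generators to form a complete intersection $Z \supseteq X$ and then trying to force $\deg Z \le \deg X$ cannot work: the regularity bound only gives $d_i \le (d-1)e + 1$, so $\deg Z = d_1 \cdots d_e$ may be as large as $((d-1)e+1)^e$, which vastly exceeds $d$. The hypothesis $d \le r/(2e)$ compares $d$ to the ambient dimension $r$ and gives no handle whatsoever on the product $d_1 \cdots d_e$. What your outline is missing is exactly a device that converts ``cut out by few equations'' into ``complete intersection,'' and that device is Faltings' criterion, not a degree comparison against a candidate $Z$.
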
 
		
		Finally, we consider additional restrictions among cubic hypersurfaces and codimension 2 complete intersections of degree $(2, 2)$ or $(4, 4)$ coming from the middle Betti number. Since $b_m$ is either equal to $m + 1$ or $m + 4$ if $Y$ is a complete intersection of two quadrics (p. 20 of \cite{Rei}), we can exclude this case if we assume that $b_m$ is exponential in $m$. We can also remove complete intersections of two quartics $(4, 4)$ if we assume that $b_m < 2 \cdot 3^m$. Let $H(a_1, \ldots, a_r)$ be the two-variable generating function for Hodge numbers of complete intersections of hypersurfaces of degree $(a_1, \ldots, a_r)$ in $\mathbb{P}^n$ (Th\'eor\`eme 2.3 on p. 52 and Corollaire 2.4 on p. 53 of \cite{Gro}, p. 19 of \cite{Rei}). Note that $H(a, a) = 2H(a) + (1 + y)(1 + z) H(a)$ by the reasoning on p. 20 of \cite{Rei} and the middle primitive Betti number of a smooth hypersurface of degree $a$ and dimension $u > 0$ is $\frac{(-1)^u}{a}(a - 1 + (1 - a)^{u + 2})$ (Corollary 1.8 on p. 14 of \cite{Hu}). Thus, we have that $b_m(Y) > 2 \cdot 3^m - 5$ if $Y$ is the complete intersection $(4, 4)$ of two quadrics.
		
	\end{proof}
	
	\begin{rem} \label{conjsharp} ~\\
		\vspace{-5mm}
		\begin{enumerate}
			\item Hartshorne's original conjecture states that a smooth variety $Y \subset \mathbb{P}^n$ of dimension $m$ is a complete intersection if $(n - m) < \frac{1}{3} n$ (p. 1017 of \cite{H3}). An overview of work related to this conjecture is given in the introduction to \cite{ESS}. If the original conjecture holds, then a codimension $2$ variety $Y$ is a complete intersection if $n \ge 7$. 
			
			\item A possible method of approaching the codimension 2 case of Theorem \ref{resyfy} using more explicit examples without using results on Hartshorne's conjecture is related to work of Lanteri--Palleschi \cite{LP} (Proposition 2.1 (j) on p. 863 of \cite{LP}, Theorem 2.1 on p. 153 -- 154 of \cite{BP}) according to the result of Rogora \cite{R} with the codimension restriction on varieties $Y \subset \mathbb{P}^n$ of dimension $m$ such that $\dim F(Y) = 2m - 4$ (Remark 3 on p. 208 of \cite{R}). 
		\end{enumerate}
	\end{rem}
	
	If we restricted to varieties with a \emph{connected} Fano variety of lines, we can exclude $1$-dimensional families of quadrics and $2$-dimesnional families of (projective) $(m - 2)$-planes from those varieties characterized by the $Y-F(Y)$ relation.
	
	\begin{prop} \label{yfyfamex}
		There are $1$-dimensional families of quadrics or $2$-dimensional families of (projective) $(m - 2)$-planes with connected Fano varieties of lines which do \emph{not} satisfy the $Y-F(Y)$ relation.
	\end{prop}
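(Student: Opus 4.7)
The plan centers on a Betti-number obstruction already latent in the proof of Lemma~\ref{reldim}. Matching the coefficient of $t^4$ on both sides of the Poincar\'e-polynomial form of the $Y-F(Y)$ relation yields
\[
b_0(F(Y)) \;=\; \frac{b_2(Y)\bigl(b_2(Y)+1\bigr)}{2},
\]
so if $F(Y)$ is connected then the $Y-F(Y)$ relation forces $b_2(Y)=1$. This is the only algebraic input I will need from the relation.

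The second step is to verify that any $Y$ of either of the two exceptional Rogora types has $b_2(Y)\geq 2$. For a smooth $1$-dimensional family of smooth quadrics $Y\to B$ over a smooth connected curve $B$ with fibers $Q$ of dimension $m-1\geq 2$, the Leray spectral sequence (which degenerates on the $E_2$-page because $B$ is a curve) together with monodromy-invariance of the hyperplane class of the fibers gives $b_2(Y)\geq b_2(B)+1\geq 2$. For a scroll $Y=\mathbb{P}(\mathcal{E})\to B$ over a smooth projective surface, Leray--Hirsch gives $b_2(Y)=b_2(B)+1\geq 2$, since any smooth projective surface satisfies $b_2\geq 1$ via its hyperplane class. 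Combining these estimates with the first step produces the desired contradiction: whenever such a $Y$ has connected $F(Y)$, the $Y-F(Y)$ relation cannot hold.

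To realize the ``there are'' assertion concretely, I would then exhibit an explicit example of each type whose $F(Y)$ is connected. Natural candidates are a generic smooth quadric bundle over $\mathbb{P}^1$ with odd-dimensional fibers (so that each fiber's Fano variety of lines is irreducible and the relative Fano is irreducible over the simply connected base), and a generic $\mathbb{P}^{m-2}$-bundle over $\mathbb{P}^2$ embedded via a sufficiently twisted ample line bundle. The main obstacle is verifying connectedness of $F(Y)$ for the chosen example: one must rule out additional components coming from ``horizontal'' lines transverse to the fibration or from lines lying in special subloci of $Y$, an analysis that depends on the explicit bundle data and embedding. Once this is in place, the Betti-number argument automatically certifies that the relation fails.
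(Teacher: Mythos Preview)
Your plan and the paper's proof share the same engine: the identity $b_0(F(Y)) = \tfrac{1}{2}b_2(Y)(b_2(Y)+1)$ extracted from the $t^4$-coefficient in Lemma~\ref{reldim}, so that connectedness of $F(Y)$ forces $b_2(Y)=1$. Where you diverge is in how you establish $b_2\geq 2$. The paper picks the concrete example $Y=\mathbb{P}^1\times Q$ (and an analogous explicit plane family), writes down $[Y]$ and $[Y^{(2)}]$ as polynomials in $\mathbb{L}$, and reads off that the $\mathbb{L}^2$-coefficient of $[Y^{(2)}]-(1+\mathbb{L}^m)[Y]$ equals $3$ directly. You instead argue structurally via Leray/Leray--Hirsch for any smooth quadric bundle over a curve or $\mathbb{P}^{m-2}$-bundle over a surface. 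Your route is cleaner and covers a whole class of examples at once, at the cost of tacitly assuming the Rogora families are genuine fibrations (which they need not be in general, though this is harmless for an existence statement). The paper's route is more hands-on but entirely self-contained within $K_0(\Var_k)$. Both proofs leave the verification that $F(Y)$ is actually connected for the chosen example unaddressed---you flag this honestly as the main obstacle, while the paper simply assumes it---so on that point you are at least as complete as the original.
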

	
	\begin{proof}
		Suppose that $F(Y)$ is connected. If $Y$ is taken to be a $1$-dimensional family of quadrics or $2$-dimensional family of (projective) $(m - 2)$-planes, the $Y-F(Y)$ relation might not necessarily be satisfied. While checking this, we will use the version of the $Y-F(Y)$ relation which states that $[Y^{(2)}] = (1 + \mathbb{L}^m)[Y] + \mathbb{L}^2 [F(Y)]$. In the constructions below, we will use the fact that there linear subspaces contained in the the Grassmannian (with respect to the Pl\"ucker embedding) which can be constructed out of pencils of linear subspaces of $\mathbb{P}^n$ containing a fixed linear subspace $A$ and contained in a fixed linear subspace $A$ and containing a fixed linear subspace $B$ of $\mathbb{P}^n$ (Theorem 3.16 on p. 110 and Theorem 3.22 on p. 115 of \cite{HT}). Given a Grassmannian $\mathbb{G}(r, n)$, these can be used to construct linear subspaces of dimension $\le \max(n - r, r + 1)$. \\

		\begin{enumerate}
			
			\item Suppose that $2m \le n$ and consider the trivial $1$-dimensional family of quadrics $\mathbb{P}^1 \times Q$, where $Q \subset \mathbb{P}^m$ is a quadric hypersurface. Note that we take family to mean a (projective) line in the $\left( \binom{m + 2}{2} - 1 \right)$-dimensional projective space giving the Hilbert scheme of quadrics of dimension $m - 1$. By Example 2.4.7 on p. 73 -- 74 of \cite{CNS}, we have that $[Q] = [\mathbb{P}^{m - 1}]$ if $m$ is even and $[Q] = [\mathbb{P}^{m - 1}] + \mathbb{L}^{\frac{m - 1}{2}}$ if $m$ is odd. \\
			
			If $m$ is even, this implies that $[Y] = (\mathbb{L} + 1) (\mathbb{L}^{m - 1} +  \ldots + \mathbb{L} + 1) = \mathbb{L}^m + 2 \mathbb{L}^{m - 1} + \ldots + 2 \mathbb{L} + 1$ and  \[ (1 + \mathbb{L}^m)[Y] = 1 + 2 \mathbb{L} + \ldots + 2 \mathbb{L}^{2m - 1} + 2 \mathbb{L}^{2m}. \]
			
			Since $([X_1] + \ldots + [X_\ell])^{(n)} = \sum_{n_1 + \ldots + n_\ell = n} \prod_{i = 1}^\ell [X_i]^{(n_i)}$ (Remark 4.2 on p. 5 of \cite{G1}), we have that 
			
			\begin{align*}
				[Y^{(2)}] &= 1 + 3 \mathbb{L}^2 + \ldots + 3 \mathbb{L}^{2m - 2} + \mathbb{L}^{2m} + 3 \mathbb{L} + 3 \mathbb{L}^2 + \ldots + 3 \mathbb{L}^{2m - 2} \\ 
				&+ \mathbb{L}^{2m} + 4 \mathbb{L}^3 + \ldots + 4 \mathbb{L}^m + 2 \mathbb{L}^{m + 1} + \ldots + 2 \mathbb{L}^{2m - 1}
			\end{align*} 
			
			Since the quadratic term of $(1 + \mathbb{L}^m) [Y]$ is $2 \mathbb{L}^2$ and the quadratic term of $[Y^{(2)}]$ is $5 \mathbb{L}^2$, the quadratic term of $[Y^{(2)}] - (1 + \mathbb{L}^m) [Y]$ is $3 \mathbb{L}^2$. Since the $Y-F(Y)$ relation states that $[Y^{(2)}] = (1 + \mathbb{L}^m) [Y] + \mathbb{L}^2 [F(Y)] \Longrightarrow \mathbb{L}^2 [F(Y)] = [Y^{(2)}] - (1 + \mathbb{L}^m) [Y]$, we can apply the Poincar\'e polynomial motivic measure to see that this contradicts our assumption that $F(Y)$ is connected since the constant term is $3$ instead of $1$. \\
			
			Suppose that $m$ is odd. Then, we have that $[Q] = [\mathbb{P}^{m - 1}] + \mathbb{L}^{\frac{m - 1}{2}}$. This implies that $[Y] = 1 + 2 \mathbb{L}  + 2 \mathbb{L}^2 + \ldots + 2 \mathbb{L}^{\frac{m - 3}{2}} + 3 \mathbb{L}^{\frac{m - 1}{2}} + 3 \mathbb{L}^{\frac{m + 1}{2}} + 2 \mathbb{L}^{\frac{m + 3}{2}} + \ldots + 2 \mathbb{L}^{m - 1} + \mathbb{L}^m$ and \[ (1 + \mathbb{L}^m) [Y] = 1 + 2 \mathbb{L}  + \ldots + 2 \mathbb{L}^{\frac{m - 3}{2}} + 3 \mathbb{L}^{\frac{m - 1}{2}} + 3 \mathbb{L}^{\frac{m + 1}{2}} + 2 \mathbb{L}^{\frac{m + 3}{2}} + \ldots + 2 \mathbb{L}^{m - 1} + \mathbb{L}^m. \]	
			
			Using the same method as above, we have that 
			\begin{align*}
				[Y^{(2)}] &= 1 + 3 \mathbb{L}^2 + \ldots + 3 \mathbb{L}^{m - 3} + 4 \mathbb{L}^{m - 1} + 4 \mathbb{L}^{m + 1} + 3 \mathbb{L}^{m + 3} + \ldots + 3 \mathbb{L}^{m - 1} + \mathbb{L}^{2m} \\
				&+ 1 + 2 \mathbb{L} + 2 \mathbb{L}^2 + \ldots + 2 \mathbb{L}^{\frac{m - 3}{2}} + 3 \mathbb{L}^{\frac{m - 3}{2}} + 3 \mathbb{L}^{\frac{m - 1}{2}} + 2 \mathbb{L}^{\frac{m + 3}{2}} + \ldots + 2 \mathbb{L}^{m - 1} + \mathbb{L}^m + \ldots + 2 \mathbb{L}^{2m - 1}.
			\end{align*}
			
			As with the previous case, we find that the quadratic term of $(1 + \mathbb{L}^m) [Y]$ is $2 \mathbb{L}^2$ and the quadratic term of $[Y^{(2)}]$ is $5 \mathbb{L}^2$. If $\mathbb{P}^1 \times Q$ satisfies the $Y-F(Y)$ relation, we would find that the constant term of the Poincar\'e polynomial of $F(Y)$ is equal to $3$. This contradicts our assumption that $F(Y)$ is connected.

			\item As mentioned above, we consider $(m - 2)$-planes that we consider are $(m - 2)$-planes $\Gamma$ such that $A \subset \Gamma \subset B$ for some fixed (projective) $(m - 4)$-plane $A$ and $(m - 1)$-plane $B$ such that $A \subset B$. The family considered here is the union of such $(m - 2)$-planes. Using an inclusion-exclusion argument, we see that the class of this union of $(m - 2)$-planes in $K_0(\Var_k)$ is a polynomial in $\mathbb{L}$. Thus, it suffices to study point counts over $\mathbb{F}_q$. Since this is determined by quotients $\Gamma/A \subset B/A$ and $A$, we have that $\# Y (\mathbb{F}_q) = \# A(\mathbb{F}_q) \cdot \# (B/A)(\mathbb{F}_q)$. This means that $\# Y(\mathbb{F}_q) = (1 + q + \ldots + q^{m - 4}) (1 + q + q^2) = (1 + 2q + 3q^2 + \ldots)$ and the coefficient of $q^2$ is equal to $3$. On the other hand, the fact that $([X_1] + \ldots + [X_\ell])^{(n)} = \sum_{n_1 + \ldots + n_\ell = n} \prod_{i = 1}^\ell [X_i]^{(n_i)}$ (Remark 4.2 on p. 5 of \cite{G1}) implies that the $\# Y^{(2)}(\mathbb{F}_q) = 1 + 2q + 6q^2 + \ldots$.  Matching coefficients, this implies that it is impossible for $F(Y)$ to be connected.
			
		\end{enumerate}

		In general, it is difficult to check whether a particular relation in $K_0(\Var_k)$ is satisfied since the projections from the incidence correspondences used to define the families of varieties used here (p. 208 -- 209 of \cite{R}) are not necessarily Zariski locally trivial fibrations or piecewise trivial fibrations. \\ 
		
	\end{proof}
	
	The methods above have similar impliciationsfor other relations in $K_0(\Var_k)$. 
	
	\begin{cor}
		
		Consider a polynomial relation in $K_0(\Var_k)$ with integer coefficients involving symmetric products of an $m$-dimensional smooth projective variety $Y \subset \mathbb{P}^n$, its second symmetric product $Y^{(2)}$, the Fano variety of lines $F(Y) \subset \mathbb{G}(1, n)$ and $\mathbb{A}^1$. If it can only be satisfied by $Y$ such that $\dim F(Y) = 2m  - 4$,  the only varieties defined by $\le \frac{n}{2}$ equations or of degree $\le \frac{n}{4}$ that can satisfy the relation are cubic hypersurfaces.

	\end{cor}
	
	\begin{rem} \label{incomprel}
		While the $Y-F(Y)$ is close to characterizing cubic hypersurfaces, the $Y-F(Y)$ relation does \emph{not} necessarily generate possible polynomial relations between a cubic hypersurface $Y \subset \mathbb{P}^{m + 1}$ and its Fano variety of lines $F(Y) \subset \mathbb{G}(1, m + 1)$. This can be checked explicitly for the case $m = 2$ or $m = 3$ when $Y$ is taken to have an ordinary double point (Example 5.8 on p. 20 of \cite{GS}).
	\end{rem}
	
	\color{black}
	
	\subsection{Approach via uniruledness property and an application to a stable rationality question} \label{uniruledyfy}
	
	Our initial approach to Question \ref{whatyfy} was to use uniruledness properties of varieties which can satisfy the given relation. The most recent/general restriction using this approach (Proposition \ref{gencodim}, Corollary \ref{nodeg}) is given below. These methods can be generalized to analyze other relations in $K_0(\Var_k)$ with some properties similar to the $Y-F(Y)$ relation. More specifically, we will apply the logic of the proof of Proposition \ref{kodimnew} to related stable birationality problem (Example \ref{symsb}) and obtain geometric restrictions to varieties satisfying relations sharing certain properties with the $Y-F(Y)$ relation (Example \ref{kodimnewext}). \\

	Using a result on unirationality of symmetric products (Proposition \ref{kodimnew}), we will first show that complete intersections of $\ge 2$ general hypersurfaces (in their given degrees) of dimension $\ge 2$ do \emph{not} satisfy Galkin--Shinder's $Y-F(Y)$ relation (Corollary \ref{nodeg}). Afterwards, we will use the proof of Proposition \ref{kodimnew} to rule out the $Y-F(Y)$ for the intersection of an integral variety with a \emph{very general} hypersurface (Example \ref{vgen}). Before doing this, we will state the birational geometry results used.

	\begin{prop} \label{kodimnew}
		Suppose that $\overline{k} = k$ and $\cha k = 0$. If $Y$ is a smooth projective variety of dimension $d \ge 1$ such that $[Y^{(2)}] \equiv [Y] \pmod{\mathbb{L}}$, the Kodaira dimension of $Y$ is $\kappa(Y) = -\infty$. If $d \ge 2$, the variety $Y$ must be uniruled. \\
	\end{prop}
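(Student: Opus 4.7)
The plan is to convert the hypothesis $[Y^{(2)}] \equiv [Y] \pmod{\mathbb{L}}$ into a stable birational equivalence between $Y$ and the Hilbert scheme $Y^{[2]}$ via the Larsen--Lunts theorem, and then to derive a contradiction from a dimension mismatch of maximal rationally connected (MRC) quotients.

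First I would pass from the symmetric product to the Hilbert scheme. The Hilbert--Chow morphism $Y^{[2]} \to Y^{(2)}$ is a resolution whose exceptional locus is a $\mathbb{P}^{d-1}$-bundle over the diagonal $\Delta \cong Y$, giving $[Y^{[2]}] = [Y^{(2)}] + ([\mathbb{P}^{d-1}] - 1)[Y]$ in $K_0(\Var_k)$. Since $[\mathbb{P}^{d-1}] - 1 = \mathbb{L} + \cdots + \mathbb{L}^{d-1}$ lies in the ideal $(\mathbb{L})$, the hypothesis rewrites as $[Y^{[2]}] \equiv [Y] \pmod{\mathbb{L}}$. The Larsen--Lunts theorem (Theorem 6.1.5 on p.~134 of \cite{CNS}) identifies $K_0(\Var_k)/(\mathbb{L})$ with the free abelian group on stable birational classes of smooth projective varieties, and sends each such variety to its own class; consequently $Y$ and $Y^{[2]}$ are stably birational.

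Assuming $d \ge 2$, I would next show that if $Y$ is not uniruled then $Y^{[2]}$ is not uniruled either. Let $\sigma : \widetilde{Y \times Y} \to Y \times Y$ be the blowup of the diagonal with exceptional divisor $E$, and let $q : \widetilde{Y \times Y} \to Y^{[2]}$ be the quotient by the lifted $S_2$-action. The involution acts by $-1$ on the normal bundle $N_{\Delta/Y \times Y}$ and thus trivially on its projectivization $E = \mathbb{P}(N_{\Delta/Y \times Y})$, so $q$ is a finite double cover ramified precisely along $E$. Combining the blowup formula $K_{\widetilde{Y\times Y}} = \sigma^*(p_1^* K_Y + p_2^* K_Y) + (d-1)E$ with the Riemann--Hurwitz identity $K_{\widetilde{Y\times Y}} = q^* K_{Y^{[2]}} + E$ yields
\[
q^* K_{Y^{[2]}} \;=\; \sigma^*(p_1^* K_Y + p_2^* K_Y) + (d-2)E.
\]
If $Y$ were not uniruled, Boucksom--Demailly--P\u{a}un--Peternell (BDPP) would give $K_Y$ pseudoeffective, and for $d \ge 2$ the term $(d-2)E$ is effective, so the right-hand side is pseudoeffective. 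Pseudoeffectivity descends through the finite surjection $q$ (via $q_* \circ q^* = \deg(q)\cdot\mathrm{id}$), so $K_{Y^{[2]}}$ is pseudoeffective, and another application of BDPP shows $Y^{[2]}$ is not uniruled.

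Finally I would close the argument using the stable birational invariance of the MRC base. Because $\mathbb{P}^n$ is rationally connected, the MRC base $Z(X \times \mathbb{P}^n)$ is birational to $Z(X)$ for any smooth projective $X$; hence stable birational equivalence $X \sim_{\mathrm{SB}} X'$ forces $Z(X) \cong_{\mathrm{bir}} Z(X')$. Applied to $Y \sim_{\mathrm{SB}} Y^{[2]}$, and combined with the fact that both varieties are non-uniruled (so $Z(Y)=Y$ and $Z(Y^{[2]})=Y^{[2]}$), this gives $Y \cong_{\mathrm{bir}} Y^{[2]}$, which is absurd since $\dim Y = d < 2d = \dim Y^{[2]}$. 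So $Y$ must be uniruled whenever $d \ge 2$, and then $\kappa(Y) = -\infty$ follows from the easy implication that uniruledness forces negative Kodaira dimension. The main obstacle is the canonical-bundle computation on $Y^{[2]}$: tracking the contribution of $E$ through both the blowup and the $S_2$-quotient is what makes the condition $d \ge 2$ essential, since for $d = 1$ the coefficient $(d-2)E$ becomes anti-effective and the propagation of pseudoeffectivity from $Y$ to $Y^{[2]}$ genuinely fails, so that case would require a separate argument (for example a direct Poincar\'e-polynomial analysis analogous to Lemma \ref{reldim}).
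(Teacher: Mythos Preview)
Your argument for $d \ge 2$ is correct and takes a genuinely different route from the paper. Both proofs begin the same way---pass from $Y^{(2)}$ to $Y^{[2]}$ and apply Larsen--Lunts to obtain a stable birational equivalence between $Y$ and $Y^{[2]}$---but then diverge. The paper quotes Corollary~2.6.4 of \cite{CNS} (a variety stably birational to something of strictly smaller dimension must be uniruled) to conclude that $Y^{[2]}$ is uniruled, pushes uniruledness down to $Y^{(2)}$, and then invokes Cadorel--Campana--Rousseau \cite{CCR} to transfer uniruledness from $Y^{(2)}$ back to $Y$. You instead run the contrapositive by hand: the identity $q^* K_{Y^{[2]}} = \sigma^*(p_1^*K_Y + p_2^*K_Y) + (d-2)E$ together with BDPP shows that pseudoeffectivity of $K_Y$ propagates to $K_{Y^{[2]}}$, and your MRC-base argument is essentially a reproof of the \cite{CNS} corollary in the situation at hand. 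Your approach trades the black-box citation of \cite{CCR} for an explicit canonical-bundle computation plus BDPP; the paper's route is shorter but leans on more external input.

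The one place your proof is incomplete is the Kodaira-dimension claim for $d = 1$, which you flag. The paper handles all $d \ge 1$ uniformly by combining $\kappa(Y^{[2]}) = -\infty$ with Arapura--Archava's formula $\kappa(Y^{(2)}) = 2\kappa(Y)$ from \cite{AA}; since you never invoke that identity, and since your pseudoeffectivity transfer genuinely breaks down once the coefficient $(d-2)$ on $E$ becomes negative, you have no substitute in dimension~$1$.
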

	
	\begin{rem}
		In general, it is known that uniruled varieties have Kodaira dimension $-\infty$ (Corollary 1.11 on p. 189 of \cite{Ko2}). However, the converse is only known in dimension $\le 3$ (Conjecture 1.12 on p. 189 of \cite{Ko2}). \\
	\end{rem}

	\begin{proof}
		The blowup at the diagonal $\Bl_{\Delta_Y}(Y \times Y) \longrightarrow Y \times Y$ induces the blowup $Y^{[2]} \longrightarrow Y^{(2)}$, which is also a blowup at the diagonal. Since $Y$ is smooth and projective, the variety $Y^{[2]}$ is also smooth and projective (Example 7.3.1 on p. 169 of \cite{F2}, Theorem 3.1 on p. 5 of \cite{Le}, Theorem 1.4 on p. 10 of \cite{Ko2}). \\
		
		Since $Y^{[2]}$ and $Y$ are smooth projective varieties, the isomorphism $K_0(\Var_k)/(\mathbb{L}) \longrightarrow \mathbb{Z}[SB]$ from Larsen--Lunts' motivic measure (Theorem 2.3 on p. 87 of \cite{LL}, Theorem 6.1.5 on p. 134 of \cite{CNS}) implies that $Y^{[2]}$ and $Y$ are stably birational to each other. Since we assumed that $d \ge 1$, we have that $\dim Y < \dim Y^{[2]} = 2 \dim Y$. Since $Y$ and $Y^{[2]}$ are \emph{not} birational, the following results imply that $Z$ is uniruled: \\
		
		\begin{lem}(Lemma 33.25.10 of \cite{St}) \\
			Let $k$ be a field. Let $X$ be a variety over $k$ which has a $k$-rational point $x$ such that $X$ is smooth at $x$. Then $X$ is geometrically integral over $k$. \\
		\end{lem}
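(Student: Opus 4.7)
The plan is to unwind the claim into two pieces: that $X$ is geometrically reduced, and that $k$ is algebraically closed in the function field $k(X)$. By a standard result (which I would quote), for an integral variety over $k$ these two conditions together are equivalent to $X$ being geometrically integral. So the task reduces to establishing each of them using only the smooth $k$-point $x$.

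For geometric reducedness, I would first shrink to a smooth affine open neighborhood $U$ of $x$. Then $U \times_k \bar k$ is smooth over $\bar k$, hence reduced. Because $U$ is dense in the integral scheme $X$ and the generic point of $X$ lies in $U$, to conclude that $X$ is geometrically reduced it suffices to verify that the function field $k(X)$ is a separable extension of $k$. This I would extract from the structure of the smooth point: by Cohen's structure theorem applied to the regular local ring $\mathcal{O}_{X,x}$ whose residue field is $k$ (via the $k$-rational point) and whose cotangent space has the ``correct'' dimension (via smoothness), the completion $\widehat{\mathcal{O}}_{X,x}$ is isomorphic to $k[[t_1,\dots,t_n]]$ with $n=\dim X$. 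The fraction field of $k[[t_1,\dots,t_n]]$ is separable over $k$, so $k(X) \hookrightarrow \operatorname{Frac}(\widehat{\mathcal{O}}_{X,x})$ is separable over $k$ as well.

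For geometric irreducibility, let $k'$ denote the algebraic closure of $k$ inside $k(X)$; I want to show $k' = k$. Any element $\alpha \in k'$ is integral over $k \subset \mathcal{O}_{X,x}$ and lies in $k(X) = \operatorname{Frac}(\mathcal{O}_{X,x})$. Since $\mathcal{O}_{X,x}$ is a regular local ring, hence normal, it follows that $\alpha \in \mathcal{O}_{X,x}$. Reducing modulo the maximal ideal yields a $k$-algebra map $k' \to \mathcal{O}_{X,x}/\mathfrak{m}_x = k$ (here I crucially use that $x$ is a \emph{$k$-rational} point, so the residue field is $k$ itself). Since $k'/k$ is a field extension, this map is injective, forcing $k' = k$. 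Combined with the previous paragraph, $X$ is geometrically integral.

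The main potential obstacle is justifying cleanly that smoothness at $x$ plus the residue field being $k$ gives the power series identification of $\widehat{\mathcal{O}}_{X,x}$; one has to invoke Cohen's structure theorem together with the fact that for a smooth point the embedding dimension equals the Krull dimension and there is a canonical coefficient field equal to $k$. A minor secondary issue is the initial reduction: one must confirm, for the version of ``variety'' being used, that geometrically reduced plus algebraically closed in $k(X)$ really implies geometrically integral for an integral $X$; this is a quick fact about extension of scalars but should be cited explicitly.
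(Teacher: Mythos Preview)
The paper does not prove this lemma at all: it is simply quoted from the Stacks Project as an input to the proof of Proposition~\ref{kodimnew}. So there is no ``paper's proof'' to compare against; your proposal supplies an argument where the paper is content to cite one.

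Your argument is correct. The decomposition into (i) $k(X)/k$ separable and (ii) $k$ algebraically closed in $k(X)$ is the standard one, and both halves go through. One small remark on efficiency: in your geometric reducedness paragraph, once you observe that $U\times_k\bar k$ is smooth over $\bar k$ and hence reduced, you already have that $k(X)=k(U)$ is separable over $k$ (since for an integral $k$-variety, geometric reducedness of any dense open is equivalent to separability of the function field). The subsequent detour through Cohen's structure theorem and $\widehat{\mathcal O}_{X,x}\cong k[[t_1,\dots,t_n]]$ is valid but unnecessary; you could drop it and avoid the ``main potential obstacle'' you flag. Your argument for (ii) via normality of the regular local ring $\mathcal O_{X,x}$ and the residue map to $k$ is clean and uses the $k$-rationality of $x$ in exactly the right place.
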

		
		\begin{cor}(Corollary 2.6.4 on p. 477 of \cite{CNS}) \label{sbvsbir} \\
			Let $k$ be a field of characteristic $0$, and $X$ and $Y$ be stably birational integral $k$-varieties such that $X$ is \emph{not} uniruled and $\dim Y \le \dim X$. Then $X$ and $Y$ are birational. \\
		\end{cor}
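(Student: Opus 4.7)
The plan is to invoke the theory of maximal rationally connected (MRC) fibrations, due to Kollár--Miyaoka--Mori and Campana, which in characteristic $0$ assigns to every smooth projective variety $V$ a dominant rational map $V \dashrightarrow V_{\mathrm{MRC}}$ with rationally connected general fibers and with $V_{\mathrm{MRC}}$ not uniruled. The construction is a birational invariant: if two smooth projective varieties are birational, their MRC quotients are birational. I will use this to show that a non-uniruled factor is cancellable in stable birational equivalence, which is exactly the content of the corollary.

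First I unfold stable birationality to obtain integers $a, b \ge 0$ with $X \times \mathbb{P}^a$ birational to $Y \times \mathbb{P}^b$. Passing to smooth projective birational models of $X$ and $Y$ (legitimate in characteristic $0$ by Hironaka's resolution of singularities, and preserving integrality), I may assume $X$ and $Y$ are smooth and projective. Next I compare MRC quotients on the two sides. Since $\mathbb{P}^n$ is rationally connected and rational connectedness is preserved under products with rationally connected varieties, the MRC fibration of $V \times \mathbb{P}^n$ factors as projection to $V$ followed by the MRC fibration of $V$, so $(V \times \mathbb{P}^n)_{\mathrm{MRC}}$ is birational to $V_{\mathrm{MRC}}$. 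Thus $(X \times \mathbb{P}^a)_{\mathrm{MRC}}$ is birational to $X_{\mathrm{MRC}}$ and $(Y \times \mathbb{P}^b)_{\mathrm{MRC}}$ is birational to $Y_{\mathrm{MRC}}$. Because $X$ is not uniruled, a general point of $X$ lies on no rational curve, so the MRC equivalence classes on $X$ are points and $X_{\mathrm{MRC}}$ is birational to $X$ itself. Birational invariance of the MRC quotient then forces $X$ to be birational to $Y_{\mathrm{MRC}}$.

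It remains to deduce that $Y$ is birational to $Y_{\mathrm{MRC}}$, after which the chain of birational equivalences yields the conclusion. The MRC fibration $Y \dashrightarrow Y_{\mathrm{MRC}}$ is dominant with rationally connected, hence connected, general fibers, so its relative dimension equals $\dim Y - \dim Y_{\mathrm{MRC}}$. The hypothesis $\dim Y \le \dim X$, together with $\dim Y_{\mathrm{MRC}} = \dim X$ from the previous step, forces this relative dimension to vanish. A dominant rational map between integral varieties of the same dimension with connected generic fiber is birational, so $Y \sim Y_{\mathrm{MRC}} \sim X$ and we are done. The main obstacle is the careful invocation of MRC theory, specifically the two facts that non-uniruledness of $X$ identifies $X_{\mathrm{MRC}}$ with $X$ birationally, and that MRC quotients are insensitive to $\mathbb{P}^n$-factors; once these ingredients from \cite{Ko2} are granted, the dimension count completes the argument cleanly.
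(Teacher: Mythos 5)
Your argument is correct, but note that the paper does not prove this statement at all: it is quoted verbatim as Corollary 2.6.4 of \cite{CNS} and used as a black box inside the proof of Proposition \ref{kodimnew}, so there is no in-paper argument to compare against. Your MRC-fibration proof is the standard route to this cancellation statement (and is the same circle of ideas used in the cited source): unfold stable birationality, use that the MRC quotient is a birational invariant unaffected by $\mathbb{P}^n$-factors (which needs the Graber--Harris--Starr theorem that the MRC base is not uniruled, as you implicitly invoke), identify $X_{\mathrm{MRC}}$ with $X$ via non-uniruledness, and finish by a dimension count. One small imprecision: in the last step, ``dominant rational map between integral varieties of the same dimension with connected generic fiber is birational'' is not quite right as stated --- a degree-two cover has connected (but not geometrically trivial) generic fiber. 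What saves you is that the general fiber of the MRC fibration is a \emph{rationally connected}, hence geometrically integral, variety, and a zero-dimensional geometrically integral scheme is a single reduced point, which does force degree one. With that wording tightened, the proof is complete.
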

		
		Thus, the variety $Y^{[2]}$ is uniruled and its Kodaira dimension is $\kappa(Y^{[2]}) = -\infty$. Since $Y^{[2]}$ is a desingularization of $Y^{(2)}$, we have that \[ -\infty = \kappa(Y^{[2]}) = \kappa(Y^{(2)}) = 2\kappa(Y) \text{ (Theorem 1 on p. 1369 of \cite{AA}).}  \]  This implies that $\kappa(Y) = -\infty$. \\
		
		Since any variety dominated by a uniruled variety is uniruled, the variety $Y^{(2)}$ is also uniruled. In fact, the following result implies that $Y$ itself is uniruled if $d \ge 2$: \\ 
		
		\begin{cor}(Cadorel--Campana--Rousseau, Corollary 4.2 on p. 9 -- 10 of \cite{CCR}) \\
			Suppose that $X$ is compact and K\"ahler. 
			
			\begin{enumerate}
				\item $X$ is rationally connected if and only if $X^{(m)}$ is for some $m \ge 1$. \\
				
				\item $X$ is uniruled if and only if $X^{(m)}$ for some $m \ge 1$, unless $X$ is a curve of genus $g > 0$ and $m > g$. In that case, $X^{(m)}$ is uniruled and $X$ is \emph{not} uniruled. 
			\end{enumerate}
			
		\end{cor}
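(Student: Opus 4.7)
The plan is to prove each equivalence by exploiting the finite surjective quotient map $\pi \colon X^m \to X^{(m)}$ together with standard stability properties of uniruledness and of rational connectedness. Both directions will reduce, in the end, either to Iitaka's additivity of Kodaira dimension for products or to the behavior of the maximal rationally connected (MRC) fibration under products and finite quotients.

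For the ``only if'' directions, if $X$ is uniruled then $X^m$ is uniruled (take a rational curve in one factor and constants in the others), and since $X^m \to X^{(m)}$ is dominant, $X^{(m)}$ is uniruled as well. If $X$ is rationally connected, then $X^m$ is rationally connected (a product of rationally connected smooth projective varieties is rationally connected, by Koll\'ar--Miyaoka--Mori), and again domination by $X^m$ gives rational connectedness of $X^{(m)}$, since rational connectedness is preserved under dominant rational maps of smooth projective varieties after resolving singularities of $X^{(m)}$.

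For the converse directions, I would use that $\pi$ is finite surjective, so $\kappa(X^{(m)}) = \kappa(X^m) = m \, \kappa(X)$ by Iitaka additivity for products; hence $\kappa(X^{(m)}) = -\infty$ if and only if $\kappa(X) = -\infty$. For rational connectedness of $X^{(m)}$ forcing that of $X$, one pulls back the MRC fibration of a resolution of $X^{(m)}$ to $X^m$, observes that it becomes $S_m$-equivariant, and then uses the projection $X^m \to X$ to a chosen factor together with the product structure to conclude that the MRC quotient of $X$ is a point. The main obstacle is the converse for uniruledness: $\kappa(X) = -\infty$ is only conjecturally equivalent to uniruledness in general (Mumford's conjecture), so one cannot simply invoke the Kodaira dimension comparison. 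The curve exception makes this vivid: for a smooth curve $X$ of genus $g > 0$ with $m > g$, the Abel--Jacobi morphism realizes $X^{(m)}$ as a $\mathbb{P}^{m - g}$-bundle over $\Jac(X)$ and so is uniruled, even though $X$ is not. To show this is the only obstruction in higher dimensions, I would produce a covering family of rational curves on a resolution of $X^{(m)}$ and use the explicit description of $\pi$ together with its branch locus to lift a general such curve to $X^m$, whose image under a coordinate projection then supplies a nonconstant rational curve in $X$.
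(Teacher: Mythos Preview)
The paper does not supply its own proof of this corollary; it is quoted as a black box from Cadorel--Campana--Rousseau and invoked inside the proof of the proposition on the Kodaira dimension restriction. There is therefore no argument in the present paper to compare yours against.

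That said, your sketch has a genuine gap in the converse direction for uniruledness. You propose to take a general rational curve $C$ in a resolution of $X^{(m)}$ and ``lift'' it through $\pi\colon X^m\to X^{(m)}$, then project to a factor. But $\pi$ is ramified along the big diagonal, and a general member of a covering family of curves will meet that divisor; the fibre product $C\times_{X^{(m)}}X^m$ is then a branched cover of $\mathbb{P}^1$ whose components may all have positive genus, so nothing rational need appear in $X^m$ or in $X$. The curve exception you yourself cite illustrates exactly this failure: for an elliptic curve $E$ and $m=2$, the fibres of the Abel--Jacobi map $E^{(2)}\to E$ are $\mathbb{P}^1$'s, but each pulls back in $E\times E$ to a translate of the anti-diagonal, which is a copy of $E$, not a rational curve. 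Your outline gives no mechanism that separates $\dim X\ge 2$ from this case; simply asserting that the branch locus can be handled ``by its explicit description'' is where the real content is missing. The argument in \cite{CCR} does not proceed by lifting individual curves but rather by analysing how the MRC fibration (and more generally Campana's core) of $X$ induces that of $X^{(m)}$, which is what furnishes the needed control. Your sketch for the rationally connected converse gestures in this direction but is likewise too vague to stand as a proof: saying the pulled-back fibration ``becomes $S_m$-equivariant'' and then ``using the projection'' does not by itself force the MRC base of $X$ to be a point.
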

	\end{proof}
	
	Before applying the methods used here to study varieties satsifying the $Y-F(Y)$ relation, we obtian some initial restrictions on degrees of generic hypersurfaces containing them. Recall from Lemma \ref{noemp} that $F(Y) \ne \emptyset$ if $Y \subset \mathbb{P}^n$ satsifies the $Y-F(Y)$ relation. Applying Theorem 4.3 on p. 266 of \cite{Ko2} gives a bound on (general) hypersurfaces containing a variety satisfying the $Y-F(Y)$ relation. \\
	
	\begin{cor} \label{degbound}
		Suppose that $Y \subset \mathbb{P}^n$ is a $d$-dimensional variety satisfying the $Y-F(Y)$ relation. Then, the variety $Y$ is \emph{not} contained in a general hypersurface of degree $r > 2n - 3$. \\
	\end{cor}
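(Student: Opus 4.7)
The plan is to derive a contradiction by producing a line inside any hypersurface containing $Y$ and then appealing to a classical dimension count that forbids such a line in sufficiently high degree. Concretely, suppose for contradiction that $Y \subset X \subset \mathbb{P}^n$ with $X$ a general hypersurface of degree $r > 2n - 3$.

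First I would apply Lemma \ref{noemp}: since $Y$ satisfies the $Y-F(Y)$ relation and $\dim Y \geq 1$, the Fano scheme $F(Y)$ is nonempty. Pick any line $\ell \subset Y$; by the containment $Y \subset X$, we also have $\ell \subset X$, so $F(X) \neq \emptyset$ as well.

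Next I would invoke Theorem 4.3 on p. 266 of \cite{Ko2}, whose content is precisely the expected-dimension count $\dim F(X) = 2n - r - 3$ for the Fano variety of lines on a general degree-$r$ hypersurface in $\mathbb{P}^n$; this forces $F(X) = \emptyset$ as soon as $r > 2n - 3$. That contradicts the conclusion of the previous paragraph and closes the argument.

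The main obstacle is not the logical structure, which is essentially a one-line contradiction, but rather aligning the precise genericity hypothesis in Kollár's theorem with the phrase ``general hypersurface of degree $r$'' used in the statement of the corollary; in particular, one must check that the emptiness conclusion holds Zariski-generically (for a nonempty open subset of the parameter space of degree-$r$ hypersurfaces) rather than only very generically, so that the containment $Y \subset X$ is actually forbidden rather than just unlikely. A brief verification of the strict-versus-weak inequality at the threshold $r = 2n - 3$ would also be worthwhile.
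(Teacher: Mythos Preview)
Your proof is correct and matches the paper's own argument essentially verbatim: both combine Lemma \ref{noemp} (nonemptiness of $F(Y)$) with Koll\'ar's result that a general hypersurface of degree $r > 2n - 3$ contains no lines, and derive the contradiction from the inclusion $F(Y) \subset F(X)$. Your closing remarks about the precise meaning of ``general'' and the threshold case are sensible side-checks but do not affect the core argument.
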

	
	\begin{proof}
		As mentioned above, this is an application of Lemma  \ref{noemp} and the fact that the Fano variety of lines for a general hypersurface of degree $r > 2n - 3$ is empty (Theorem 4.3 on p. 266 of \cite{Ko2}). \\
	\end{proof}

	 Under these restrictions, we can obtain restrictions on varieties $Y \subset \mathbb{P}^n$ satisfying the $Y-F(Y)$ relation which are intersections of generic hypersurfaces (among given degrees). This is done using uniruledness properties and repeated applications of the projective dimension theorem (Theorem 7.2 on p. 48 of \cite{H2}) and the dimension of $F(Y)$ for a generic hypersurface of dimension $r$ (Theorem 4.3 on p. 266 of \cite{Ko2}):

	\begin{prop} \label{gencodim} 
		Suppose that $Y \subset \mathbb{P}^n$ is a $m$-dimensional variety satisfying the $Y-F(Y)$ relation. 
		
		\begin{enumerate}
			\item If $Y$ is the intersection of $u$ \emph{general} hypersurfaces $W_i$ of degree $r_i \le 2n - 3$ for each $i$, we have that \[ (u + 1)(n - (R + 2)) + (R + 1) \le 2m - 4, \] where $R = \max(r_1, \ldots, r_u)$. 
			
			\item If $\dim Y \ge 2$ and $Y$ is the complete intersection of $m \ge 2$ \emph{general} hypersurfaces $W_i$ of degree $r_i \le 2n - 3$ for each $i$, then it does \emph{not} satisfy the $Y-F(Y)$ relation.
		\end{enumerate}

	\end{prop}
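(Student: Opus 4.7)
My plan is as follows. First apply Lemma \ref{reldim} to obtain $\dim F(Y) = 2m - 4$, and Proposition \ref{kodimnew} (combined with Lemma \ref{noemp}) to conclude $Y$ is uniruled and covered by lines. Since $Y = W_1 \cap \cdots \cap W_u$, a line lies in $Y$ precisely when it lies in every $W_i$, so $F(Y) = \bigcap_{i=1}^{u} F(W_i)$ inside the Grassmannian $\mathbb{G}(1,n)$ of dimension $2n-2$. For each general $W_i$ of degree $r_i \le 2n - 3$, Kollár's Theorem 4.3 on p. 266 of \cite{Ko2} asserts that $F(W_i)$ is irreducible of dimension $2n - r_i - 3$, equivalently of codimension $r_i + 1 \le R + 1$ in $\mathbb{G}(1,n)$.

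For part (1), I iterate the projective dimension theorem (Theorem I.7.2 of \cite{H2}) together with Bertini. Each $F(W_i)$ is cut out in $\mathbb{G}(1,n)$ by a section of the rank-$(r_i+1)$ bundle $\Sym^{r_i} S^{\vee}$ (where $S$ is the tautological subbundle), so for generic $W_i$ the intersection $\bigcap_{j \le i} F(W_j)$ has dimension dropping by exactly $r_i + 1$ at each step. Combining this with the dimension of the fiber of the incidence variety over a generic point of $Y$ (forced by uniruledness to be $m - 3$, since the total incidence has dimension $\dim F(Y) + 1 = 2m - 3$ and surjects onto $Y$), and rearranging after using $r_i \le R$, should yield the stated inequality $(u+1)(n - R - 2) + R + 1 \le 2m - 4$.

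For part (2), I specialize to a complete intersection, so $m = n - u$. Plugging into the inequality from (1) and simplifying gives $uR \ge (u-1)n + 3$. Separately, the Debarre--de Jong conjecture for complete intersections holds generically (Remark \ref{conjconseq}), giving $\dim F(Y) = 2n - \sum r_i - u - 2$; equating with $2m - 4 = 2n - 2u - 4$ forces $\sum r_i = u + 2$. For $u \ge 3$, combined with $r_i \ge 2$ this is impossible since $\sum r_i \ge 2u > u + 2$. For $u = 2$ it forces $r_1 = r_2 = 2$, hence $R = 2$, but $uR \ge (u-1)n + 3$ becomes $4 \ge n + 3$, violating $n \ge 4$ (which is forced by $\dim Y \ge 2$). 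Either case yields a contradiction, so $Y$ cannot satisfy the $Y-F(Y)$ relation.

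The main obstacle is the precise application of the projective dimension theorem in the Grassmannian setting: the classical statement of \cite{H2} is for $\mathbb{P}^n$, and transporting it to $\mathbb{G}(1, n)$ via the Plücker embedding gives only weak bounds due to the large ambient dimension. The cleanest route I see is to intersect one $F(W_i)$ at a time, invoking Bertini and Kollár's dimension formula so that each codimension drops by exactly $r_i + 1 \le R + 1$, and then track the cumulative drop together with the uniruledness-derived count of lines through a generic point to recover the asserted form of the inequality.
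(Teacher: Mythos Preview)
Your plan diverges from the paper's in both parts, and Part~1 has a real gap. The paper proves Part~1 by iterating the projective dimension theorem directly: from $F(Y)=\bigcap_i F(W_i)$ it asserts
\[
\dim F(Y)\ \ge\ \sum_i(2n-3-r_i)\;-\;(u-1)(n-1),
\]
bounds $\sum r_i\le uR$, and compares with $\dim F(Y)=2m-4$ (Lemma~\ref{reldim}) to read off the stated inequality. For Part~2 the paper does \emph{not} invoke the conclusion of Part~1: it uses Proposition~\ref{kodimnew} to deduce that $Y$ is uniruled, hence $\sum r_i\le n$ for a complete intersection, substitutes this into the same lower bound to obtain $\dim F(Y)\ge u(n-2)-1\ge 2n-5$, and contradicts $\dim F(Y)=2m-4\le 2(n-2)-4=2n-8$.

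The gap in your Part~1 is that your Bertini/bundle-section computation yields the \emph{correct} expected dimension $\dim F(Y)=2(n-1)-\sum_i(r_i+1)=2n-2-u-\sum r_i$, so after $\sum r_i\le uR$ you obtain only $\dim F(Y)\ge 2n-2-u(R+1)$. This is strictly weaker than the paper's bound $(u+1)(n-R-2)+(R+1)=un+n-uR-2u-1$; the two differ by $(n-1)(u-1)>0$ whenever $u\ge 2$. Your incidence-variety remark (generic fibre of dimension $m-3$) is just a reformulation of $\dim F(Y)=2m-4$ and contributes no additional inequality, so no amount of ``rearranging'' recovers the claim. (You are right to flag that the projective dimension theorem in the Grassmannian is the crux; note that the paper's displayed bound uses ambient dimension $n-1$ rather than $\dim\mathbb{G}(1,n)=2n-2$.) Downstream, your Part~2 relies on the Part~1 inequality $uR\ge(u-1)n+3$ precisely in the $u=2$ case, so that case is not covered by your argument; the Debarre--de~Jong step handling $u\ge 3$ (and it needs all $r_i\ge 2$, which you should state) is fine but different from the paper's uniform uniruledness route.
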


	\begin{rem} ~\\
		\vspace{-5mm}
		\begin{enumerate}
			\item The condition on the $r_i$ was inserted to keep $F(Y)$ from being empty (Corollary \ref{degbound}). 
			
			\item Hartshorne's conjecture \ref{conjsharp} implies that Part 2 can be rewritten as generic condition on varieties of a given degree when the codimension is small (proof of Lemma \ref{reldim}).
			
			\item The dimension bound in Part 1 uses Lemma \ref{reldim}. \\
		\end{enumerate}
	\end{rem}

	\begin{proof}
		Both parts use the same initial setup. By definition, the Fano scheme $F(Y)$ is the intersection of $F(W)$ for hypersurfaces $W$ containing $Y$ (p. 196 of \cite{EH}). Writing $Y = W_1 \cap \cdots W_u$,this means that $F(Y) = F(W_1) \cap \cdots \cap F(W_u)$. Then, induction on $u$ and repeatedly using the projective dimension theorem (Theorem 7.2 on p. 48 of \cite{H2}) implies that \[ \dim F(Y) \ge \dim F(W_1) + \ldots + \dim F(W_u) - (u - 1)(n - 1) \] since $F(W_i) \subset \mathbb{G}(1, n) \cong \mathbb{P}^{n - 1}$. \\
		
		We first prove the dimension inequality in Part 1. 
		
		\begin{enumerate}
			\item Let $R = \max(r_1, \ldots, r_n)$. Since the hypersurfaces $W_i \subset \mathbb{P}^n$ are taken to be general among those of degree $r_i$ in $\mathbb{P}^n$ and $\dim F(W_i) = 2n - 3 - r_i$ for such degree $r_i$ hypersurfaces (Theorem 4.3 on p. 266 of \cite{Ko1}), this means that 
			
			\begin{align*}
				\dim F(Y) &\ge \dim F(W_1) + \ldots + \dim F(W_u) - (u - 1)(n - 1) \\
				&= (2n - 3 - r_1) + \ldots + (2n - 3 - r_u) - (u - 1)(n - 1) \\
				&= un - 2u + n - 1 - (r_1 + \ldots + r_u) \\
				&\ge un - 2u + n - 1 - Ru \\
				&= un - (R + 2)u + n - 1 \\
				&= u(n - (R + 2)) + (n - (R + 2)) + (R + 1) \\
				&= (u + 1)(n - (R + 2)) + (R + 1).
			\end{align*}
			
			Since $\dim F(Y) = 2m  - 4$ by Lemma \ref{reldim}, this implies that \[ (u + 1)(n - (R + 2)) + (R + 1) \le 2m - 4. \]

			\vspace{2mm} 
			
			Next, we prove Part 2 by combining the methods above with a uniruledness property. 
			
			\item 
			Since the complete intersection $Y = W_1 \cap \cdots \cap W_u$ is uniruled by Proposition \ref{kodimnew}, we have that $r_1 + \ldots + r_u \le n$ (Section 4.4 on p. 99 of \cite{D}). \\
			
			Recall from the proof of Part 1 that $\dim F(Y) \ge \dim F(W_1) + \ldots + \dim F(W_u)$.
			
			Since the hypersurfaces $W_i \subset \mathbb{P}^n$ are taken to be general among those of degree $r_i$ in $\mathbb{P}^n$, a standard result on dimensions of Fano varieties of lines on generic hypersurfaces of a given degree (Theorem 4.3 on p. 266 of \cite{Ko2}) implies that 
			
			\begin{align*}
				\dim F(Y) &\ge \dim F(W_1) + \ldots + \dim F(W_u) - (u - 1)(n - 1) \\
				&= (2n - 3 - r_1) + \ldots + (2n - 3 - r_u) - (u - 1)(n - 1) \\
				&= 2un - 3u - (r_1 + \ldots + r_u) - (un - u - n + 1) \\
				&\ge 2un - 3u - n - (un - u - n + 1) \\
				&= u(n - 2) - 1 \\
				&\ge 2(n - 2) - 1 \\
				&= 2n - 5
			\end{align*}
			
			However, this is impossible since $\dim F(Y) = 2m - 4$ by Lemma \ref{reldim}. 
		\end{enumerate}

	\end{proof}

	Using Proposition \ref{kodimnew}, we can consider the case where $Y$ is a complete intersection of general hypersurfaces. \\

	\begin{cor} \label{nodeg}
		If $Y \subset \mathbb{P}^n$ is a variety of dimension $\ge 2$ which is the complete intersection of $m \ge 2$ \emph{general} hypersurfaces $W_i$ of degree $r_i$ for each $i$, then it does \emph{not} satisfy the $Y-F(Y)$ relation. \\
	\end{cor}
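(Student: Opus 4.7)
The plan is to reduce this statement to Proposition \ref{gencodim}(2) by using Corollary \ref{degbound} to dispose of the large-degree case. The only way in which Corollary \ref{nodeg} differs from Proposition \ref{gencodim}(2) is that the numerical bound $r_i \le 2n - 3$ is no longer imposed on each $r_i$. So the entire task is to handle the possibility that some $r_i$ exceeds $2n - 3$.

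Concretely, I would split into two cases. First, assume $r_i \le 2n - 3$ for every $i$. Then the hypotheses of Proposition \ref{gencodim}(2) are satisfied verbatim, and that result already gives the conclusion. Second, suppose that some $r_{i_0} > 2n - 3$. Because $Y = W_1 \cap \cdots \cap W_m$ is a complete intersection, in particular $Y \subset W_{i_0}$, and $W_{i_0}$ is by hypothesis a general hypersurface of degree $r_{i_0} > 2n - 3$. If $Y$ were to satisfy the $Y-F(Y)$ relation, Corollary \ref{degbound} would prohibit $Y$ from being contained in any general hypersurface of degree exceeding $2n - 3$, contradicting the existence of $W_{i_0}$. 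Hence in this case too $Y$ cannot satisfy the $Y-F(Y)$ relation.

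Combining the two cases yields the corollary. There is no genuine obstacle here: Corollary \ref{degbound} takes care of the large-degree regime (via the emptiness of $F(Y)$ for general high-degree hypersurfaces, together with Lemma \ref{noemp}), while Proposition \ref{gencodim}(2) directly handles the low-degree regime (via uniruledness forcing $r_1 + \cdots + r_m \le n$ combined with the dimension count $\dim F(Y) = 2\dim Y - 4$ from Lemma \ref{reldim}). The only small bookkeeping point to be careful about is that in the large-degree case we only need a single $W_{i_0}$ of large degree to obtain a contradiction, so no assumption on the other $r_i$'s is required.
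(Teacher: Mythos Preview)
Your proposal is correct and follows essentially the same approach as the paper: the paper's proof simply says that Corollary \ref{degbound} rules out containment in any general hypersurface of degree $> 2n-3$, and then Part 2 of Proposition \ref{gencodim} covers the remaining case where all $r_i \le 2n-3$. Your two-case split is exactly this argument, spelled out with a bit more bookkeeping.
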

	
	\begin{proof}
		By Corollary \ref{degbound}, a variety $Y \subset \mathbb{P}^n$ satisfying the $Y-F(Y)$ is \emph{not} contained in a general hypersurface of degree $r > 2n - 3$. Combining this with Part 2 of Proposition \ref{gencodim}, we cover general hypersurfaces of arbitrary degrees.
	\end{proof}
	
	If we look at \emph{very general} hypersurfaces, we can use the logic of the uniruled restriction to rule out the $Y-F(Y)$ relation in possibly non-complete intersections.
	
	\begin{exmp}(\textbf{The $Y-F(Y)$ relation and possibly non-complete intersections}) \label{vgen} \\
		Recall that we used a result comparing stable birational and birational equivalence classes of varieties (Corollary  \ref{sbvsbir}) to show that smooth projective varieties of dimension $\ge 2$ which satisfy the $Y-F(Y)$ must be uniruled. After substituting in the Poincar\'e polynomials, this restriction was used to show that a complete intersection of general hypersurfaces (of their given degrees) does \emph{not} satisfy the $Y-F(Y)$ relation. However, the varieties that fail to satsify the $Y-F(Y)$ relation are not necessarily complete intersections. By Proposition 1 on p. 1 of \cite{Ko1}, the intersection of an integral $k$-variety $X \subset \mathbb{P}^N$ and a very general hypersurface $H \subset \mathbb{P}^N$ is \emph{not} uniruled. Thus, such an intersection $X \cap H$ of dimension $\ge 2$ does \emph{not} satisfy the $Y-F(Y)$ relation. \\
	\end{exmp}
	
	The comparison between stable birational and birational equivalence in Corollary \ref{sbvsbir} can also be used to study stable birationality questions. 
	
	\begin{exmp}(\textbf{Symmetric products and stable birationality}) \label{symsb} \\
		In Question 1 on p. 10 of \cite{CCR}, it is asked whether $X^{(m)}$ being unirational (resp. rational,  stably rational) for some $m \ge 2$ implies that $X$ is unirational (resp. rational,  stably rational).  Applying Corollary 2.6.4 on p. 477 of \cite{CNS} to a smooth projective resolution of $X^{(m)}$, we can show that this is false for stable birationality if $X$ has Kodaira dimension $\kappa(X) \ge 0$.  \\
	\end{exmp}
	
	A more ``straightforward'' application of the argument of Proposition \ref{kodimnew} from the previous section is to restrict varieties which satisfy other relations which share certain properties with the $Y-F(Y)$ relation. \\

	Suppose a smooth projective variety $Y$ of dimension $\ge 1$ satisfies $[Y^{(a)}] \equiv [Y^{(b)}] \pmod{\mathbb{L}}$ for some $a > b$ (e.g. $a = 2$ and $b = 1$ in the $Y-F(Y)$ relation). If $\dim Y \ge 2$, the same we can use Corollary 4.2 on p. 9 -- 10 of \cite{CCR} again to find that such a $Y$ must be uniruled. Here is a generalization of uniruledness arguments restricting varieties satisfying the $Y-F(Y)$ relation taking these observations into account: \\
	
	\begin{prop}(Restatement of Kodaira dimension/uniruled restriction) \label{kodimnewext} \\
		If $Y$ is a smooth projective variety of dimension $d \ge 1$ such that $[Y^{(a)}] \equiv [Y^{(b)}] \pmod{\mathbb{L}}$ for some $a > b \ge 1$, the Kodaira dimension of $Y$ is $\kappa(Y) = -\infty$. If $d \ge 2$, the variety $Y$ must be uniruled. \\
	\end{prop}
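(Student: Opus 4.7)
The plan is to mimic the proof of Proposition \ref{kodimnew}, replacing the Hilbert scheme $Y^{[2]}$ by a suitable smooth projective model of the higher symmetric product. Write $m = \dim Y$. First I would choose, by Hironaka resolution in characteristic zero, smooth projective varieties $A$ and $B$ birational to $Y^{(a)}$ and $Y^{(b)}$ respectively, and verify the congruences $[A] \equiv [Y^{(a)}]$ and $[B] \equiv [Y^{(b)}]$ modulo $\mathbb{L}$ in $K_0(\Var_k)$. For $a=2$ this is exactly the role played by the Cheah identity $[Y^{[2]}] - [Y^{(2)}] = ([\mathbb{P}^{m-1}] - 1)[Y]$ used in Proposition \ref{kodimnew}; in general the singular locus of a symmetric product decomposes into diagonal strata with only quotient singularities, and each blow-up of a smooth center in a Hironaka sequence contributes an exceptional divisor that is a projective bundle over a smooth locally closed subvariety, so the total correction $[A] - [Y^{(a)}]$ is a sum of multiples of $\mathbb{L}$.

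Combining these congruences with the hypothesis $[Y^{(a)}] \equiv [Y^{(b)}] \pmod{\mathbb{L}}$ gives $[A] \equiv [B] \pmod{\mathbb{L}}$. The isomorphism $K_0(\Var_k)/(\mathbb{L}) \cong \mathbb{Z}[SB]$ of Larsen--Lunts (Theorem 2.3 of \cite{LL}, Theorem 6.1.5 of \cite{CNS}) then forces $A$ and $B$ to be stably birational. Comparing dimensions, $\dim A = ad > bd = \dim B$ since $a > b \ge 1$ and $d \ge 1$, so $A$ and $B$ cannot be birational, and Corollary \ref{sbvsbir} implies that $A$ is uniruled. Uniruledness of $A$ gives $\kappa(A) = -\infty$, and birational invariance of Kodaira dimension yields $\kappa(Y^{(a)}) = -\infty$. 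The relation $\kappa(Y^{(a)}) = a \cdot \kappa(Y)$ (Theorem 1 of \cite{AA}) then forces $\kappa(Y) = -\infty$, giving the first conclusion. For the second, if $d \ge 2$, then $Y^{(a)}$ is uniruled (being dominated by the uniruled $A$), and Corollary 4.2 of \cite{CCR} lifts this to uniruledness of $Y$ itself, exactly as in the original proof.

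The main obstacle is the first step: verifying $[A] \equiv [Y^{(a)}] \pmod{\mathbb{L}}$ in the desired generality. In cases where the Hilbert scheme $Y^{[a]}$ is smooth (for instance $\dim Y \le 2$ or $a \le 3$), one can take $A = Y^{[a]}$ and use G\"ottsche-type formulas to compute $[Y^{[a]}] - [Y^{(a)}]$ explicitly as an element of $(\mathbb{L})$, generalizing the Cheah identity. For higher $a$ with $\dim Y \ge 3$, the Hilbert scheme is itself singular, and one must instead work with an equivariant resolution of $Y^a$ followed by the $S_a$-quotient, arguing inductively along the successive blow-up centers that each stage alters the class in $K_0(\Var_k)$ by a multiple of $\mathbb{L}$; the quotient singularity structure of the symmetric product is what makes this bookkeeping feasible. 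The remaining steps (Larsen--Lunts, the dimension inequality, and the Cadorel--Campana--Rousseau lift) are then immediate transcriptions of the $a=2$, $b=1$ case.
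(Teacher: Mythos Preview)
Your outline is precisely what the paper intends; it offers no proof beyond pointing back to Proposition~\ref{kodimnew} and Corollary~4.2 of \cite{CCR}, and you have correctly isolated the one new difficulty, namely producing smooth projective models $A,B$ of $Y^{(a)},Y^{(b)}$ with $[A]\equiv[Y^{(a)}]$ and $[B]\equiv[Y^{(b)}]\pmod{\mathbb L}$.

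Your justification of that step is not correct as written. The identity $[\Bl_Z W]-[W]=([\mathbb P^{c-1}]-1)[Z]\in(\mathbb L)$ requires $W$ to be \emph{smooth}; a Hironaka resolution of the singular variety $Y^{(a)}$ blows up smooth centres in a smooth ambient and passes to \emph{strict transforms}, for which no such formula holds. Indeed $[\widetilde X]\equiv[X]\pmod{\mathbb L}$ can fail for a resolution $\widetilde X\to X$ (take $X$ the projective cone over a curve of positive genus), so the quotient-singularity remark is carrying weight you have not actually supplied.

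Fortunately you need less. Any resolution $A\to Y^{(a)}$ is an isomorphism off closed subsets of dimension $<ad$, so $[A]-[Y^{(a)}]$ is a difference of classes of varieties of dimension $<ad$; an easy induction on dimension shows that every such class maps under Larsen--Lunts into the subgroup $F_{ad-1}\subset\mathbb Z[SB]$ spanned by basis elements represented in dimension $\le ad-1$. Since $bd\le ad-1$, the image of $[Y^{(b)}]$ already lies in $F_{ad-1}$, and the hypothesis then forces the single basis element $[A]_{SB}$ into $F_{ad-1}$: that is, $A$ is stably birational to a smooth projective variety of strictly smaller dimension. Now Corollary~\ref{sbvsbir} makes $A$ uniruled, and from there your deductions via \cite{AA} and \cite{CCR} go through verbatim.
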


	Again, the restriction on the Kodaira dimension immediately implies the following: \\
	
	\begin{cor} \label{anarel}
		The following hold:
		
		\begin{enumerate}
			\item If $Y \subset \mathbb{P}^{d + 1}$ is a $d$-dimensional hypersurface of degree $r$ such that $[Y^{(a)}] \equiv [Y^{(b)}] \pmod{\mathbb{L}}$ in $K_0(\Var_k)$ for some $a > b \ge 1$, then $r \le d + 1$. \\
			
			\item If $Y$ is a smooth projective surface such that $[Y^{(a)}] \equiv [Y^{(b)}] \pmod{\mathbb{L}}$ in $K_0(\Var_k)$, then $Y$ must be a ruled surface (over a curve of genus $\ge 1$) or a rational surface. \\
		\end{enumerate}
	\end{cor}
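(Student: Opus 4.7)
The plan is to reduce both statements to the Kodaira dimension and uniruledness consequences already packaged in Proposition \ref{kodimnewext}. That proposition supplies $\kappa(Y) = -\infty$ from the hypothesis $[Y^{(a)}] \equiv [Y^{(b)}] \pmod{\mathbb{L}}$ with $a > b \ge 1$, and additionally yields that $Y$ is uniruled once $\dim Y \ge 2$. So both parts become essentially classification statements about smooth projective varieties with $\kappa = -\infty$.

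For Part 1, I would apply the adjunction formula to the smooth degree $r$ hypersurface $Y \subset \mathbb{P}^{d+1}$ to identify its canonical bundle as $\omega_Y \cong \mathcal{O}_Y(r - d - 2)$. If $r \ge d + 2$, then $\omega_Y$ is either trivial (when $r = d + 2$) or very ample (when $r > d + 2$), and in either case $\kappa(Y) \ge 0$. This contradicts the conclusion $\kappa(Y) = -\infty$ supplied by Proposition \ref{kodimnewext}, forcing $r \le d + 1$. The edge case $d = 1$ is consistent: a smooth plane curve of degree $r$ has genus $\binom{r-1}{2}$, which vanishes precisely when $r \le 2 = d + 1$.

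For Part 2, after invoking Proposition \ref{kodimnewext} to obtain $\kappa(Y) = -\infty$ (and uniruledness, since $\dim Y = 2$), I would appeal to the Enriques--Kodaira classification of smooth projective surfaces. Among the classes in that classification, only rational surfaces and surfaces ruled over curves of genus $\ge 1$ have Kodaira dimension $-\infty$; every other class (Enriques, bielliptic, K3, abelian, properly elliptic, general type) has $\kappa \ge 0$. So $Y$ must fall into exactly one of the two classes asserted.

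The main obstacle, such as it is, lies not in any computation but in verifying that the named results apply as cleanly as claimed: one should check that the hypotheses of Proposition \ref{kodimnewext} are genuinely met (in particular that $Y$ is smooth and projective with $\dim Y \ge 1$, which is implicit in the corollary's setup), and that the adjunction identification of $\omega_Y$ and the Enriques--Kodaira dichotomy are both being used in their standard formulations. No further calculation is required beyond these invocations.
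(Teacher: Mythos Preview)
Your proposal is correct and matches the paper's approach exactly: the paper states the corollary with only the one-line preface ``Again, the restriction on the Kodaira dimension immediately implies the following,'' and you have supplied precisely the standard details (adjunction for Part 1, Enriques--Kodaira classification for Part 2) that this sentence points to. Your flag about the implicit smoothness hypothesis is also apt, since Proposition \ref{kodimnewext} requires it.
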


	Splitting varieties by whether they are uniruled or not (or by Kodaira dimension being nonnegative or not) can also give a general perspective on restricting varieties which satisfy given relations in $K_0(\Var_k)$: \\

	\begin{rem}(Uniruled varieties and problematic birational automorphisms) \\
		Results of Kuber (Theorem 4.1 and Theorem 5.1 on p. 482 -- 485 of \cite{Kub}) imply that the structure of (the graded ring associated to) $K_0(\Var_k)$ essentially depends on birational equivalence classes of varieties when we consider classes in $K_0(\Var_k)$ where Larsen--Lunts' cut and paste conjecture holds. By results of Liu and Sebag, this includes varieties containing only finitely many rational curves (Theorem 6.3.7 on p. 142 of \cite{CNS}) and algebraic surfaces whose $1$-dimensional components are rational curves (Corollary 6.3.8 on p. 144 of \cite{CNS}). In some sense, this is the ``opposite'' of the uniruled varieties mentioned above since we were looking at when a pair of stably birational varieties are not actually birational to each other in Proposition \ref{kodimnewext}. \\

	\end{rem}

Department of Mathematics, University of Chicago \\
5734 S. University Ave, Office: E 128 \\ Chicago, IL 60637 \\
\textcolor{white}{text} \\
Email address: \href{mailto:shpg@uchicago.edu}{shpg@uchicago.edu} 

\end{document}